\DeclarePairedDelimiter\floor{\lfloor}{\rfloor}
\newcommand{\forceindent}{\leavevmode{\parindent=1.5em\indent}}
\newcommand{\R}{\mathbb R}
\newcommand{\maxmath}{\mathop{}\!\mathrm{max}}
\newcommand{\minmath}{\mathop{}\!\mathrm{min}}
\renewcommand\section{\leftskip 0pt\@startsection {section}{1}{\z@}%
	{-3.5ex \@plus -1ex \@minus -.2ex}%
	{2.3ex \@plus.2ex}%
	{\normalfont\Large\bfseries}}
\renewcommand\subsection{\leftskip 0pt\@startsection{subsection}{2}{\z@}%
	{-3.25ex\@plus -1ex \@minus -.2ex}%
	{1.5ex \@plus.2ex}%
	{\normalfont\large\bfseries}}
\renewcommand\subsubsection{\leftskip 0pt\@startsection{subsubsection}{3}{\z@}%
	{-3.25ex\@plus -1ex \@minus -.2ex}%
	{1.5ex \@plus .2ex}%
	{\normalfont\large\bfseries}}
\newtheorem{definition}{Definition}
\newtheorem{Theorem}{Theorem}[section]
\newtheorem{corollary}[Theorem]{Corollary}
\theoremstyle{remark}
\newtheorem{remark}[Theorem]{Remark}
\begin{document}
	
\title{ \centering{Manifold Repairing, Reconstruction and Denoising from Scattered Data in High-Dimension}}
\author{Shira Faigenbaum-Golovin${^{1, *}}$~~David Levin${^1}$ \\
	\small{${^1}$ School of Mathematical Sciences, Tel Aviv University, Israel}
	\\
	\small{${^*}$ Corresponding author, E-mail address: alecsan1@post.tau.ac.il} 
}


\maketitle
\begin{abstract}
We consider a problem of great practical interest: the repairing and recovery of a low-dimensional manifold embedded in high-dimensional space from noisy scattered data. Suppose that we observe a point cloud sampled from the low-dimensional manifold, with noise, and let us assume that there are holes in the data. Can we recover missing information inside the holes? While in low-dimension the problem was extensively studied, manifold repairing in high dimension is still an open problem. We introduce a new approach, called Repairing Manifold Locally Optimal Projection (R-MLOP), that expands the MLOP method \cite{faigenbaumgolovin2020manifold}, to cope with manifold repairing in low and high-dimensional cases. The proposed method can deal with multiple holes in a manifold. We prove the validity of the proposed method, and demonstrate the effectiveness of our approach by considering different manifold topologies, for single and multiple holes repairing, in low and high dimensions.

\end{abstract}

\noindent\textbf{keywords:} Manifold learning, Manifold denoising, Manifold reconstruction, Manifold repairing, High dimension, Dimensional reduction

\noindent\textbf{MSC classification:} 65D99 \\
(Numerical analysis - Numerical approximation and computational geometry)

\section{Introduction}
\label{repairing_intro}

Imagine that we observe partial data sampled, with noise, from a manifold in high dimensional space. Is it possible to reconstruct this manifold exactly or at least accurately? For example, in the case of acquiring oil and gas well data, where each drilling operation is expensive, high-resolution sampling can be extremely pricey. In general, everybody would agree that recovering data from an incomplete sample is impossible. In this paper, we show that if the unknown data is known to lie on a manifold in high dimension, then it is possible to reconstruct it and approximate the samples that we have not seen.

Before we turn to discuss the problem for high-dimensional data, we first consider the simpler, yet challenging questions of surface reconstruction and recovering missing information. The challenge of plain surface reconstruction gained a lot of attention in the recent years, with many methods proposed to address it (see, the papers of \cite{alexa2003computing, berger2017survey, cohen1999progressive, levin2004mesh, lipman2007parameterization}). There are many challenges when dealing with this problem, among them the need for preservation of sharp features, reconstruction from uniform/non-uniform data sampling, undersampling, fast-changing curvature \cite{faigenbaumAnisoMLS}, adjacent surface segments, and the presence of noise. For example, usually reconstruction methods smooth out the existing corners and sharp features present on the surface. Several papers raised the sharp features preservation challenge and suggested methods to address it (e.g., \cite{huang2013edge, yadav2018constraint}). 

\forceindent Missing information in the data, i.e., the presence of holes, is an additional challenge in surface reconstruction. During surface acquisition, sampling the entire geometry is not always possible due to physical obstacles, occlusions, scanning angle issues, scanning costs, or to simply the fact that the available data is incomplete. The problem of hole filling can be formulated as follows: Given data sampled from a surface with holes, the task is to generate new points so as to reconstruct the data inside the hole. The problem of filling holes and recovering lost information in low dimensions has been dealt with by many researchers (for a concise survey, see \cite{guo2018survey}). The problem itself is usually treated as a two-step challenge, namely, locating the hole, and then recovering the missing data. The solution for both steps can be roughly classofoed according to the way in which the input data are given: as a point-cloud or as a mesh. 

\forceindent The hole identification problem is further challenged by the real-life scenarios since usually information regarding surface geometry is unavailable. Subsequently, recognizing the areas that need to be amended and the ones that should be omitted (since this is the true geometry of the surface), becomes an ill-defined and non-trivial task. In case some information regarding the connectivity of the data is available, mesh representation comes in handy. Methods like those presented in \cite{zhou2014algorithm, he2011virtual, jun2005piecewise} utilize this information for the hole identification by recognizing hollow edges, extract boundary via $k$-nearest neighbors search, or labeling boundaries based on the triangulation structure. On the other hand, if the data are given as a point cloud, other methods are applicable. In \cite{chalmoviansky2003filling} the authors suggested locating boundaries via sparse area heuristics, in \cite{moenning2004intrinsic} an enhanced $k$-nearest neighborhood is used, while in \cite{bendels2006detecting} a set of criteria are derived for automatic hole detection (the criteria rely on symmetric $k$-$\epsilon$ neighborhoods, their angle criterion and the deviation of a point and its neighborhood average, as well as the shape criterion, based on eigenvalues).

\forceindent Once a hole is located, reconstructing the information inside the hole can be achieved by various methods. In the case where the surface is represented as a mesh, the paper \cite{zhou2014algorithm} uses a triangle growth procedure based on the boundary edge angle, in \cite{he2011virtual} the hole is filled under constraint on the boundary, in \cite{jun2005piecewise, tang2017repair} holes are amended in a piecewise manner, i.e., by splitting holes into smaller parts and fixing latter, while in \cite{attene2013polygon} a mesh repairing technique is proposed. When the surface is given as a point cloud, the authors of \cite{chalmoviansky2003filling} suggested fitting an algebraic surface patch to the neighborhood of the boundary; this avoids parametrization and allows one to deal with holes with complicated topology. Other solutions rely on the minimal surface solutions (Plateau’s problem \cite{thi1991minimal, harrison2014soap}), diffusion \cite{davis2002filling}, or an MLS-based point-cloud resampling technique \cite{wang2007filling, moenning2004intrinsic}. In addition, some methods address the surface repairing problem as the problem of reconstructing the surface from boundary conditions \cite{wang2014restructuring, ostrov1999boundary}. 

\forceindent The problem of completing missing information arises not only in geometrical settings, but also in other fields, for example, in matrix computations \cite{candes2010matrix}, or in image processing. In the latter case, the problem is known as image inpainting, and various approaches were proposed it \cite{bertalmio2000image, richard2001fast, criminisi2004region, xu2010image}. Of special interest is the elegant solution suggested in \cite{bertalmio2000image}, where the hole edge information is propagated inside the hole using the gradient of the Laplacian in the direction of the edge. Thanks to the geometrical nature of the solution, it can be adopted to the surface case.

\forceindent The existing solutions for surface reconstruction have a hard time coping with challenges raised by real-life applications, especially in high dimension. First, they rely on some prior knowledge regarding the surface (e.g., a triangulation). In addition, these solutions do not deal with noise or outliers, which are usually present in the data. Moreover, since they were tailored for the surface repairing challenge, most of the known methods cannot be extended to the high-dimensional case (due to practical reasons connected with inefficiency of meshes in high dimensions and to the curse of dimensionality). Thus, manifold repairing in high dimensions is still an open problem. 

\forceindent In this paper, we will address the problem of hole repairing in low and high dimensions. We introduce a new approach, which extends the Manifold Locally Optimal Projection (MLOP) \cite{faigenbaumgolovin2020manifold}, to cope with manifold repairing in low and high-dimensional cases. The MLOP method is a multidimensional extension of the Locally Optimal Projection algorithm \cite{lipman2007parameterization}. The MLOP bypasses the curse of dimensionality and avoids the need for dimension reduction. It is based on a non-convex optimization problem, which leverages a generalization of the outlier-robust $L_1$-median to higher dimensions while generating noise-free quasi-uniformly distributed points reconstructing the unknown low-dimensional manifold. 

\forceindent The vanilla MLOP does not repair the manifold, since by its design it maintains the proximity to the original points. For example, see the MLOP method reconstruction of the Stanford bunny \cite{levoy2005stanford}, in Figure \ref{fig:repair_bunny} (A-B), where one can see that the MLOP method maintained the scattered points geometry, and the existing hole was not amended. To overcome this, we suggest enhancing the MLOP method to address data repairing problem by adding another force of attraction which will push the boundary points towards their convex hull (see Figure \ref{fig:repair_bunny} (C)). It should be noted that since the hole identification problem is ill-posed, and the best way to fix the missing information is by the user manually identifying the holes that need to be amended. In what follows we first assume that the location of the hole is known, and later propose a method for hole identification.
\vspace{-12mm}
\begin{figure}[H]
	\centering
	\captionsetup[subfloat]{farskip=0pt,captionskip=0pt, aboveskip=0pt}
	\subfloat[][]{ \includegraphics[width=0.33\textwidth]{./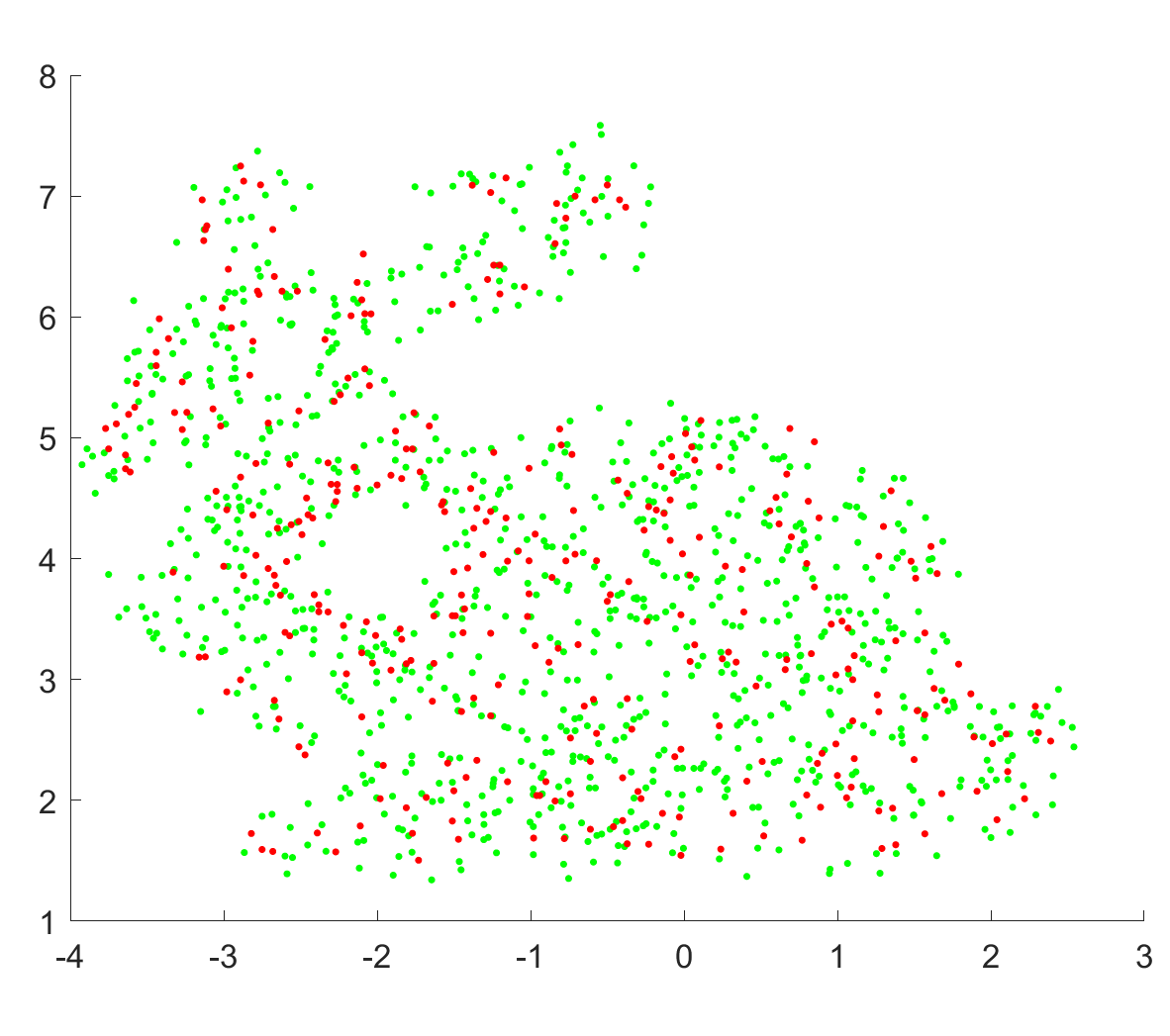} } \hspace{-1em}
	\subfloat[][]{ \includegraphics[width=0.33\textwidth]{./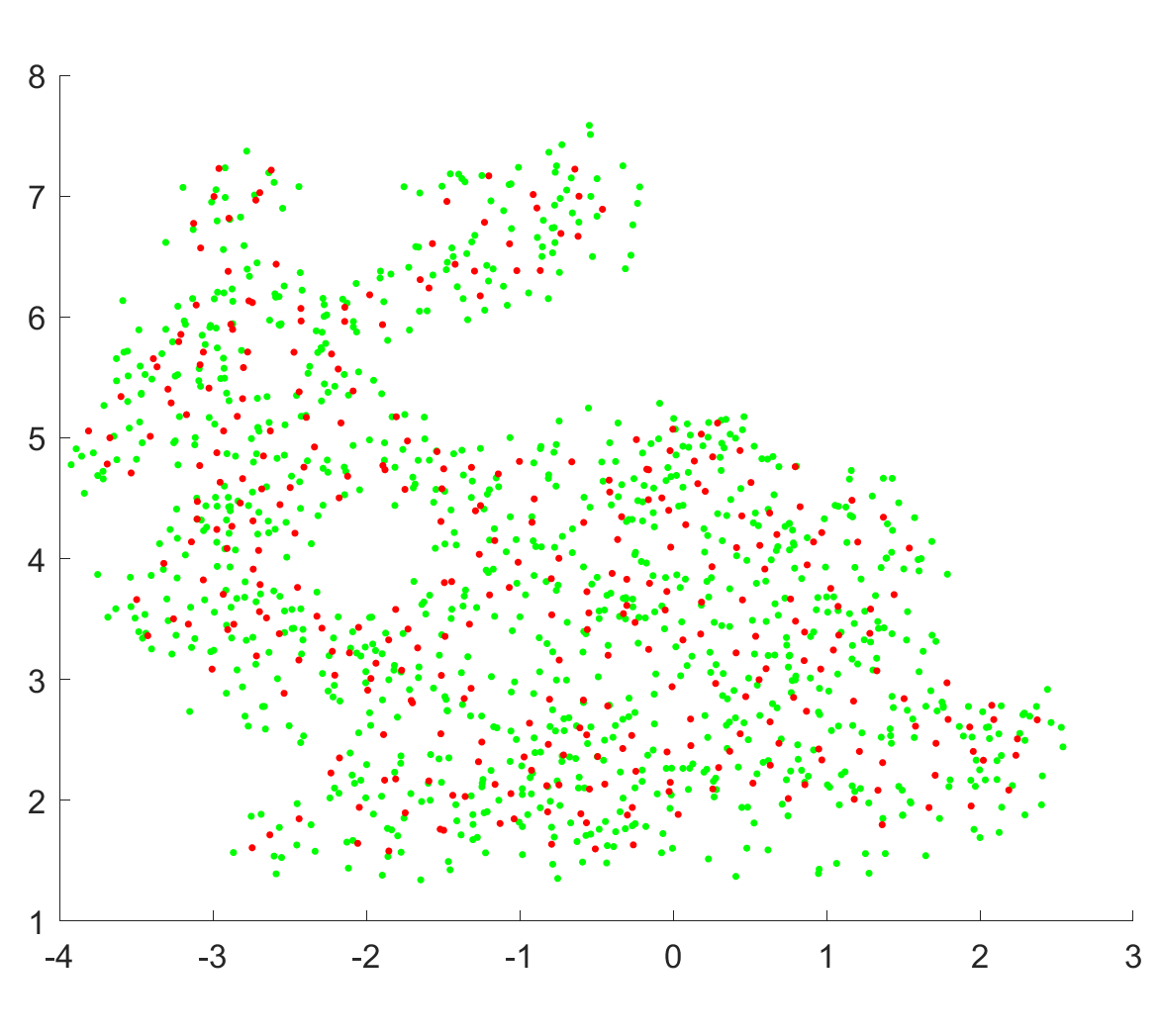} } \hspace{-0.9em}
	\subfloat[][]{\includegraphics[width=0.33\textwidth]{./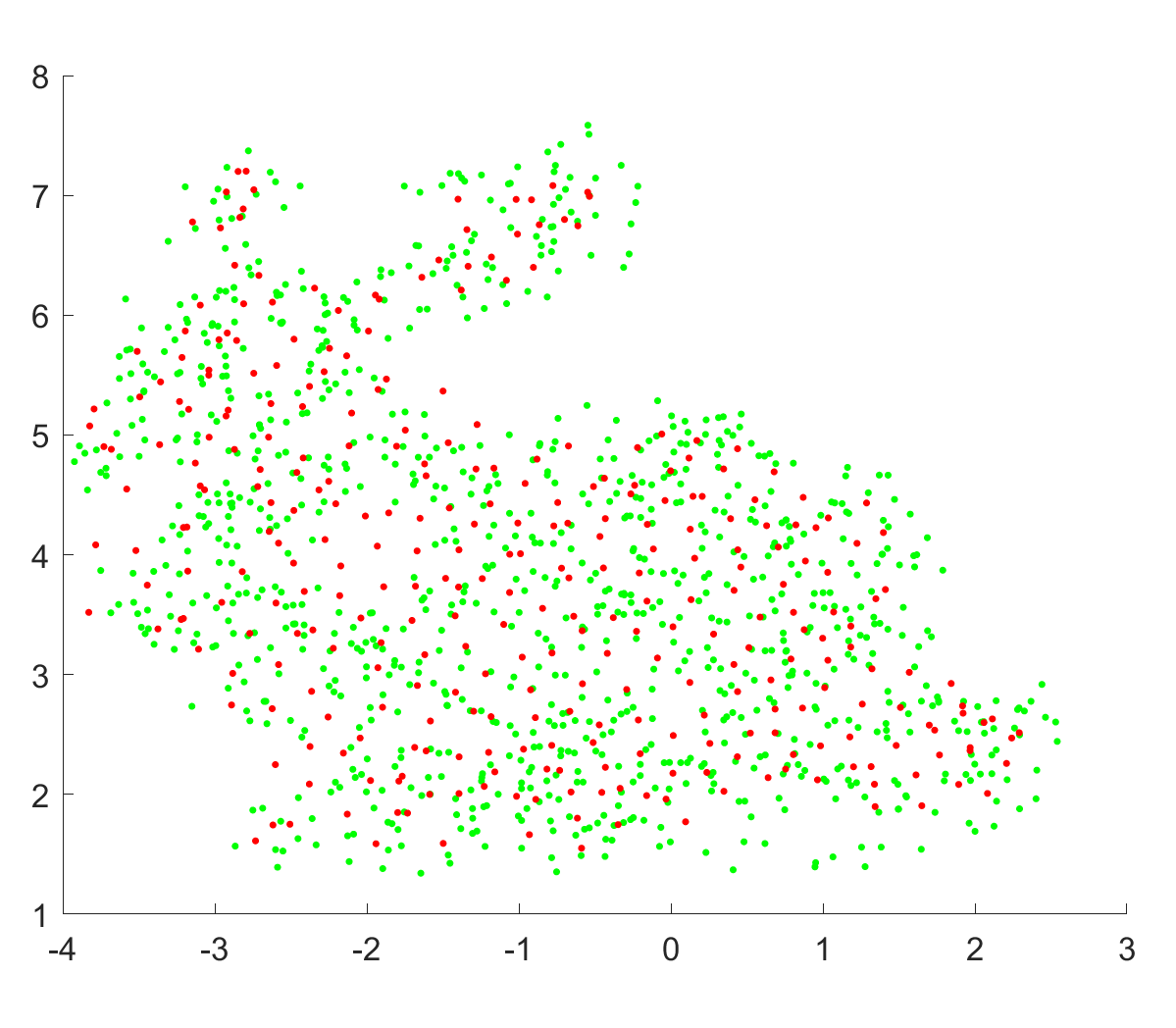}}
	\caption{Amending the scattered data for the Stanford bunny. (A) The initial scattered data of size  1K sampled from the bunny model (the initial $P$-points in green, the initial $Q$-points in red). (B) The picture produced by the plain MLOP, which results in a quasi-uniform sampling of the bunny: the hole is not amended. (C) The picture produced by the R-MLOP algorithm.}
	\label{fig:repair_bunny}
\end{figure}

\section{Preliminaries}
\subsection{Preliminaries --- the MLOP framework}
\label{MLOP_pre}	
The Locally Optimal Projection (LOP) method  was introduced in \cite{lipman2007parameterization} to approximate two-dimensional surfaces in $\mathbb{R}^3$ from point-set data. The procedure does not require the estimation of local normals  and planes,  or parametric representations. Its main advantage  is that it performs well in the case of noisy samples. In \cite{faigenbaumgolovin2020manifold} the LOP mechanism was generalized to devise the so-called \textit{Manifold Locally Optimal Projection} (MLOP) method. This method was further enhanced to deal with approximation of functions in presence of noise in \cite{faigenbaum2020approximation}. Here, we give a concise overview of the MLOP method and its key properties.

\forceindent First, we introduce the $h$-$\rho$ condition, defined for scattered-data approximation of functions (which is an adaptation of the condition in \cite{levin1998approximation} for low-dimensional data), to handle finite discrete data on manifolds.

\begin{definition}\label{def:def4}
	\textbf{$h$-$\rho$ sets of fill distance $h$  and density $\leq \rho$} with respect to a manifold $\mathcal{M}$. Let $\mathcal{M}$ be a manifold in $\mathbb{R}^n$ and consider sets of data points sampled from $\mathcal{M}$. We say that such a set $P=\{{P_j }\}_{i=1}^J$ is an 
	$h$-$\rho$ set if:\\
	1.   $h$ is the fill distance, i.e., $h=\text{median}_{p_j \in P} \minmath_{p_j \in P \backslash\{p_i\}} \|p_i-p_j\|$. \\
	2.   $\#\{P \cap \bar{B}(y,kh)\}\leq \rho k^n, \quad k \geq 1, \quad y\in \mathbb{R}^n$. \\
	Here $\#Y$ denotes the number of elements in a set $Y$ and $\bar{B}(x,r)$ denotes the closed ball of radius $r$ centered at $x$. 
\end{definition}
\forceindent Note that the last condition regarding the point  separation $\delta$ defined in \cite{levin1998approximation}, which states that there exists $\delta >0$ such that  $\|p_i-p_j \| \geq \delta, \quad 1\leq i \leq j \leq J$, is redundant in the case of finite data. We also note   that the vanilla definition of the  fill distance uses the supremum $\sup$ in its expression;  here we use the median in order to deal with the presence of outliers.

\forceindent The setting of the high-dimensional reconstruction problem is the following: Let $\mathcal{M}$ be a manifold  in $\mathbb{R}^n$  of unknown intrinsic dimension $d \ll n$. There  is given a noisy point-cloud $P=\{p_j\}_{j=1}^J \subset \mathbb{R}^n$ situated near  the manifold $\mathcal{M}$   such that   $P$ is a $h$-$\rho$ set. We wish to find a new point-set $Q=\{q_i\}_{i=1}^I \subset \mathbb{R}^n$  which will serve as a noise-free approximation of $\mathcal{M}$. We seek a solution in the form of a new, quasi-uniformly distributed point-set $Q$ that will replace the given data $P$  and provide a noise-free approximation of $\mathcal{M}$. This is achieved by leveraging the well-studied weighted $L_1$-median \cite{vardi2000multivariate} used in the LOP algorithm and requiring a quasi-uniform distribution of points $q_i \in Q$. 
These ideas are encoded by the cost function
\begin{equation} 
\label{eq:1}	 G(Q) = E_1(P,Q)+\Lambda E_2(Q) = \sum\limits_{q_i \in Q} \sum\limits_{p_j \in P} \|q_i-p_j\|_{H_{\epsilon}} w_{i,j} +
\sum\limits_{q_i \in Q} \lambda_i \sum\limits_{q_{i'} \in Q \backslash \{q_i\}} \eta(\|q_i-q_i'\|) \widehat w_{i,i'}\,, 
\end{equation}
where the weights $w_{i,j}$ are rapidly decreasing smooth functions. The MLOP implementation uses  $w_{i,j} = \exp\big\{\!-\|q_i-p_j\|^2/{h_1^2}\big\}$
and $\widehat w_{i,i'} = \exp\big\{\!-\|q_i-q_i'\|^2/{h_2^2}\big\}$.
The  $L_1$-norm used in \cite{lipman2007parameterization}  is replaced by the ``norm" $\| \cdot \|_{H_{\epsilon}}$ introduced in \cite{levin2015between} as $\|v\|_{H_\epsilon}=\sqrt{v^2+\epsilon}$, where $\epsilon >0$ is a fixed parameter (in our case we take $\epsilon=0.1$). As shown in \cite{levin2015between}, using $\| \cdot \|_{H_{\epsilon}}$ instead of $\| \cdot \|_1$ has the advantage  that one works  with a smooth cost function  and outliers can be removed. In addition, $h_1$ and $h_2$ are the support size parameters of $w_{i,j}$ and $\widehat w_{i,i'}$ which guarantee a sufficient amount of $P$ or $Q$ points for the reconstruction (for more details, see the subsection  ``Optimal Neighborhood Selection"  in \cite{faigenbaumgolovin2020manifold}). Also, $\eta(r)$ is a decreasing function such that $\eta(0)= \infty $; in our case we take $\eta(r) = \frac {1} {3r^3}$. Finally, $\{\lambda_i\}_{i=1}^I$ are constant balancing parameters.

In order to solve the problem with the cost function \eqref{eq:1}, we look for a point-set $Q$ that minimizes $G(Q)$. 
The solution $Q$ is found via the gradient descent iterations
\begin{equation}
q_{i'}^{(k+1)}=q_{i'}^{(k)}-\gamma_k \nabla G(q_{i'}^{(k)}),\qquad i'=1,\dots,I \,, 
\end{equation}
where the initial guess $\{q_i^{(0)}\}_{i-1}^I=Q^{(0)}$ consists of points   sampled from $P$. \\
The gradient of $G$ is  
\begin{equation}  \label{eq:GradG} \nabla G(q_{i'}^{(k)}) = \sum\limits_{j=1}^J {\big(q_{i'}^{(k)}-p_j\big) \alpha_j^{i'}}- \lambda_{i'}\sum\limits_{\substack {i=1 \\ i\neq i'}}^I{ \big(q_{i'}^{(k)} - q_{i}^{(k)} \big) \beta_i^{i'}}\,, \end{equation}
with the coefficients $\alpha_j^{i'}$ and $\beta_j^{i'}$  given by the formulas
\begin{equation}
\label{eq:alpha}
\alpha_j^{i'}  = \frac{w_{i,j}}{ \|q_i-p_j\|_{H_{\epsilon}}} \left(1-\frac{2}{h_1^2}\|q_i-p_j\|_{H_{\epsilon}}^2\right)
\end{equation}
and 
\begin{equation}
\label{eq:beta}
\beta_i^{i'} = \frac{\widehat w_{i,i'}}{ \|q_i-q_{i'}\|} \left( \left| {\frac{\partial \eta \left( \|q_i-q_{i'}\| \right) } {\partial r}}\right|  +
\frac{2\eta \left( \|q_i-q_{i'}\| \right) } {h_2^2}   \|q_i-q_{i'}\|
\right),	 
\end{equation}
for 	$i=1,...,I$, $i\ne i'$.
In order to balance the two terms in $\nabla G(q_{i'}^{(k)})$, the factors $\lambda_{i'}$ are initialized in the first iteration as
\begin{equation}  \label{eq:2}
\lambda_{i'} = -\,\frac{\bigg\|\sum\limits_{j=1}^J {\big(q_{i'}^{(k)}-p_j\big) \alpha_j^{i'}} \bigg\|}{\bigg\|\sum\limits_{i=1}^I{\big(q_{i'}^{(k)} - q_{i}^{(k)} \big) \beta_i^{i'}} \bigg\|}\,.
\end{equation}
Balancing the contribution of the two terms is important in order to maintain equal influence of the attraction and repulsion forces in $G(Q)$.
The step size in the direction of the gradient $\gamma_k$ is calculated as indicated in \cite{barzilai1988two}:
\begin{eqnarray} \label{gamma_k} \gamma_k = \frac{\langle \bigtriangleup q_{i'}^{(k)},  \bigtriangleup G_{i'}^{(k)} \rangle} {\langle \bigtriangleup G_{i'}^{(k)},  \bigtriangleup G_{i'}^{(k)} \rangle}\,,  \end{eqnarray}
where  $\bigtriangleup q_{i'}^{(k)}  = q_{i'}^{(k)}  - q_{i'}^{(k-1)}$ and $\bigtriangleup G_{i'}^{(k)}  = \nabla G_{i'}^{(k)}  - \nabla G_{i'}^{(k-1)}$.\\
\vspace{-6mm}
\begin{remark}\label{def:red_dim}
	The reasoning in terms of Euclidean distances, which is the cornerstone of the MLOP method, works well in low dimensions, e.g., for the reconstruction of surfaces in 3D, but breaks down in high dimensions once  noise  is present. To deal with this issue, a dimension reduction is performed via random linear sketching \cite{woodruff2014sketching}. It should be emphasized that the dimension reduction procedure is utilized solely for the calculation of norms, and the manifold reconstruction is performed in the high-dimensional space. Thus, given a point $x \in \mathbb{R}^n$, we project it to a lower dimension $m \ll n$ using a random matrix, $S$, with certain properties. Subsequently, the norm of  $\|S^t x\|$ will approximate $\|x\|$. The construction of $S$ is carried out in the following steps:
	\begin{enumerate}[noitemsep]
		\item Sample $G\in \R^{J\times m}$ with $G\sim N(0, 1)$.
		\item Compute $B \in \R^{n \times m}$ as $B:=P^{\rm t}G$.
		\item Calculate the QR decomposition of $B$ as $B = SR$, and use $S$ as the dimension reduction matrix.
	\end{enumerate}
\end{remark}

\textbf{Preliminaries --- Optimal Neighborhood Selection}

\forceindent The support sizes $h_1$  and $h_2$ of  the functions $w_{i,j} = \exp\big\{\!-\|q_i-p_j\|^2/{h_1^2}\big\}$
and $\widehat w_{i,i'} = \exp\big\{\!-\|q_i-q_i'\|^2/{h_2^2}\big\}$, respectively, are closely related to the fill distance of the $P$-points and the $Q$- points. 
Due to the importance of optimal selection of these parameters, we quote here several definitions and results from \cite{faigenbaumgolovin2020manifold}. Unlike the standard definition of the notion of fill distance in scattered-data function approximation \cite{levin1998approximation}, we introduce
\begin{definition}\label{def:def1}
	The fill distance of the set $P$ is 	
	\begin{eqnarray} \label{h_0} h_0=\text{median}_{p_i\in P} \minmath_{p_j\in P \backslash \{p_i\}} \|p_i-p_j \| \,. \end{eqnarray}
	Note  that the vanilla definition of fill distance uses the supremum in the definition (instead of the \textup{median}). However, as mentioned above, in our case we replace  the supremum with  the median so as to deal with the presence of outliers.
\end{definition}

\begin{definition}\label{def:def2}
	Given two point-clouds, $P=\{p_j\}_{j=1}^J \subset \mathbb{R}^n$ and $Q=\{q_i\}_{i=1}^I \subset \mathbb{R}^n$, situated near a manifold $\mathcal{M}$ in $\mathbb{R}^n$, such that their sizes obey the constraint $I\leq J$, denote $\nu= \floor*{\frac{J} {I}}$. Then we say that the radius that guarantees approximately $\nu$ points from $P$ in the support of each point $q_i$ is $\widehat{h}_0=c_1 h_0$, with $c_1$ given by
	\begin{eqnarray} \label{c_1}  \widehat{h}_0=c_1 h_0,~{\rm with}~ ~c_1=\text{argmin} \{c: \#(\bar{B}_{ch_0}(q_i) \cap P)\geq \nu,\,  \forall q_i\in Q\}\,. \end{eqnarray}
\end{definition}

\begin{remark}\label{def:def3}
	Let $\sigma$ be the variance of the Gaussian $w(r) = \exp\{- {r^2}/{\sigma^2}\}$. For the normal distribution, four standard deviations away from the mean account for $99.99\%$ of the set.
	In our case, by the definition of $w_{i,k}$, since $h$ is the square root of the variance, $4\sigma= 4\frac {h}{\sqrt{2}}= 2 \sqrt{2}h_1$ covers $99.99\%$ of the support size of $w_{i,k}$.
\end{remark}

The following theorem, proved in \cite{faigenbaumgolovin2020manifold}, indicates how the parameters $h_1$ and $h_2$  should be selected.

\begin{Theorem}\label{lma0}
	Let $\mathcal{M}$ be a $d$-dimensional manifold in $\mathbb{R}^n$. Suppose given two point-clouds, $P=\{p_j\}_{j=1}^J \subset \mathbb{R}^n$ and $Q=\{q_i\}_{i=1}^I \subset \mathbb{R}^n$, situated near   $\mathcal{M}$, such that their sizes obey the constraint $I\leq J$, and let $\nu= \floor*{\frac{J} {I}}$. Let $w_{i,j}$ be the locally supported weight function given by $w_{i,j} = \exp\{- {\|q_i-p_j\|^2}/{h^2}\}$. Then a neighborhood size of $h = 2 \sqrt{2}\widehat{h}_0$ guarantees $ 2^{1.5d}\nu$ points in the support of $w_{i,j}$, where $\widehat{h}_0=c_1h_0$, with $c_1$ given by  \textup{\eqref{c_1}}.
\end{Theorem}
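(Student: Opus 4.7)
The plan is to combine the density guarantee of Definition \ref{def:def2} with a volume-scaling argument driven by the intrinsic dimension $d$ of $\mathcal{M}$. By construction $\widehat{h}_0 = c_1 h_0$ is the smallest radius at which every ball $\bar{B}(q_i, \widehat{h}_0)$ is guaranteed to contain at least $\nu = \lfloor J/I \rfloor$ points of $P$. The theorem inflates this radius by the factor $2\sqrt{2}$ identified in Remark \ref{def:def3} as the effective scale of the Gaussian $w_{i,j}$, and asks for the resulting point count.

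First I would isolate the local geometry around $q_i$. Because $\mathcal{M}$ is a smooth $d$-dimensional manifold and $P$ sits close to $\mathcal{M}$, for $r$ below the reach of $\mathcal{M}$ the intersection $\bar{B}(q_i,r) \cap \mathcal{M}$ has $d$-dimensional volume of order $r^d$; the $h$-$\rho$ structure of $P$ (Definition \ref{def:def4}) plus the quasi-uniform sampling implicit in the choice of $\widehat{h}_0$ then transports this $d$-volume into a point count that likewise scales like $r^d$ rather than $r^n$. In other words, the manifold dimension, not the ambient one, governs how many $P$-points a ball of radius $r$ around $q_i$ captures.

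Next I would perform the scaling step. Enlarging the radius from $\widehat{h}_0$ to $h = 2\sqrt{2}\widehat{h}_0$ multiplies the $d$-volume by $(2\sqrt{2})^d = 2^{3d/2} = 2^{1.5d}$, and with it the enclosed point count. Stacking this on top of the baseline guarantee of $\nu$ points per ball of radius $\widehat{h}_0$ delivers at least $2^{1.5d}\nu$ points of $P$ inside $\bar{B}(q_i, h)$. Invoking Remark \ref{def:def3} to identify this enlarged ball with the effective support of $w_{i,j}$ closes the argument.

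The main obstacle is making the scaling step quantitative enough to land the clean exponent $1.5d$. The $h$-$\rho$ condition of Definition \ref{def:def4} supplies only a one-sided density bound, so the matching lower bound has to come from elsewhere: one needs $\widehat{h}_0$ to be small compared with the reach of $\mathcal{M}$ so that tangent-plane linearization of $\mathcal{M}$ near $q_i$ contributes only lower-order error, and one uses the minimality of $c_1$ in \eqref{c_1} to treat the $\nu$-per-ball count at radius $\widehat{h}_0$ as a local density baseline. One should also ensure $q_i$ is not too close to a boundary of $\mathcal{M}$, so that the enlarged ball still sees a full $d$-volume worth of manifold. Under these regularity conditions, which are standard in the $h$-$\rho$/MLOP framework, the $r^d$ scaling becomes sharp at leading order and the stated count follows.
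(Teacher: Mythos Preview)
The paper does not actually prove this theorem: it is stated in the preliminaries with the sentence ``The following theorem, proved in \cite{faigenbaumgolovin2020manifold}, indicates how the parameters $h_1$ and $h_2$ should be selected,'' and no argument is given. So there is no in-paper proof to compare against.

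That said, your proposal is exactly the argument one expects and almost certainly the one in the cited reference. The two ingredients --- the baseline count $\nu$ at radius $\widehat{h}_0$ from Definition~\ref{def:def2} and the $d$-dimensional volume scaling $(2\sqrt{2})^d = 2^{1.5d}$ --- are the only moving parts, and Remark~\ref{def:def3} is precisely what ties the radius $2\sqrt{2}\widehat{h}_0$ to the effective support of the Gaussian weight. Your identification of the weak point is also accurate: the $h$-$\rho$ condition in Definition~\ref{def:def4} is an \emph{upper} density bound, so the lower bound on point count after inflating the radius must come from an implicit quasi-uniformity assumption (together with $\widehat{h}_0$ being small relative to the reach so that the tangent-plane approximation is valid). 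The statement of the theorem is really a heuristic scaling estimate rather than a sharp inequality, and your plan matches that level of rigor.
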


\textbf{Theoretical Analysis of the MLOP Method}

For the sake of completeness, we mention here several important results regarding the convergence of the MLOP method, its order of approximation,  rate of convergence  and complexity  (for more details, see \cite{faigenbaumgolovin2020manifold}). 

\begin{Theorem}[Convergence to a stationary point]\label{thm:conv}
	Let $\mathcal{M}$ be a  in $\mathbb{R}^n$ of unknown intrinsic dimension $d$. Suppose that the scattered data points $P = \{{p_j }\}_{j =1}^J$ were sampled near the manifold $\mathcal{M}$, $h_1$ and $h_2$ are set as  in Theorem  \textup{\ref{lma0}}, and the $h$-$\rho$ set condition is satisfied with respect to $\mathcal{M}$. Let the points $Q^{(0)}=\{q_i^{(0)} \}_{i=1}^I$ be sampled from $P$. Then the gradient descent iterations \textup{\eqref{eq:2}} converge almost surely to a local minimizer $Q^*$.
\end{Theorem}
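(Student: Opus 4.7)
The plan is to reduce the claim to a standard convergence result for smooth gradient methods, after verifying that the cost $G(Q)$ has enough regularity and compactness on the relevant region of the configuration space $\mathbb{R}^{nI}$. I would split the argument into: (i) smoothness of $G$ off the collision set $\{Q : q_i = q_{i'}\text{ for some }i\ne i'\}$, (ii) coercivity plus a repulsive barrier that traps the iterates in a compact safe set, (iii) applicability of the Barzilai--Borwein step-size rule on that set, and (iv) a genericity argument that upgrades "stationary point" to "local minimizer" almost surely.

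First I would check smoothness. The smoothed norm $\|v\|_{H_\epsilon} = \sqrt{v^2+\epsilon}$ is real-analytic for $\epsilon>0$, the Gaussian weights $w_{i,j}$ and $\widehat w_{i,i'}$ are $C^\infty$, and $\eta(r)=1/(3r^3)$ is $C^\infty$ away from $r=0$. Therefore $E_1(P,\cdot)$ is globally smooth in $Q$, while $E_2$ is smooth on the open set $\mathcal{O}=\{Q:\,\min_{i\ne i'}\|q_i-q_{i'}\|>0\}$. Next I would establish compactness of the iterate trajectory. Since $P$ is finite, the attraction term $E_1(P,Q)$ grows at least linearly in $\|q_i\|$ as $\|q_i\|\to\infty$ for every $i$, so the sublevel sets of $E_1$ in the $Q$-variables are bounded. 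Because $\lambda_i$ chosen by \eqref{eq:2} gives a positive effective coefficient in front of $\eta$, the term $-\lambda_{i'}\sum_{i\ne i'}\eta(\|q_i-q_{i'}\|)$ forms a repulsive barrier that diverges whenever two $q$'s approach each other. Consequently the sublevel set $\{Q: G(Q)\le G(Q^{(0)})\}$ is contained in a compact subset $K\subset\mathcal{O}$ on which $G$ is $C^2$ with Lipschitz gradient, and the trajectory $\{Q^{(k)}\}$ stays in $K$ as long as the step size $\gamma_k$ is controlled so that $G$ does not increase.

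With Lipschitz gradient on $K$ and a Barzilai--Borwein step size as in \eqref{gamma_k}, I would invoke the standard non-monotone line-search safeguarding of Raydan/Grippo-type (which is implicit in the implementation and should be quoted as assumed), yielding $\liminf_k \|\nabla G(Q^{(k)})\|=0$ and, combined with compactness, existence of accumulation points that are stationary for $G$. The almost-sure upgrade to a local minimizer comes from the randomness implicit in the setting: the initial guess $Q^{(0)}$ is drawn (randomly) from $P$, and the norm evaluations use the random projection matrix $S$ from Remark \ref{def:red_dim}. Under this randomness, with probability one $Q^{(0)}$ does not lie on the stable manifold of any strict saddle of $G$, and by the stable-manifold theorem for gradient iterations (Lee--Panageas--Piliouras--Simchowitz--Jordan--Recht style), the limit point is not a strict saddle, hence is a local minimizer of $G$.

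The main obstacle is honest handling of the Barzilai--Borwein rule in the non-convex setting: the raw BB step need not produce monotone decrease and can in principle drive iterates toward a collision, invalidating the compactness step. I would address this by arguing that the implementation uses a safeguarded (non-monotone) backtracking version, so that whenever the proposed step would leave the sublevel set of $Q^{(0)}$ (in particular approach the collision locus where $\eta\to\infty$), it is rejected and shortened, restoring the descent framework on $K$. The second, subtler obstacle is ruling out degenerate saddles (where the Hessian has zero eigenvalues tangent to the manifold of equivalent configurations); here I would rely on a generic-perturbation argument using the random projection $S$ to ensure that the effective cost $G$ has only Morse critical points outside a measure-zero set of realizations, completing the almost-sure statement.
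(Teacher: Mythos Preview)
The paper does not actually prove this theorem: it is quoted verbatim from \cite{faigenbaumgolovin2020manifold} in the preliminaries section, with the sentence ``for more details, see \cite{faigenbaumgolovin2020manifold}''. So there is no in-paper proof to compare against; your proposal stands or falls on its own.

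On its own, the compactness step has a concrete gap. You claim $E_1(P,Q)$ grows at least linearly as $\|q_i\|\to\infty$, but the weights are $w_{i,j}=\exp\{-\|q_i-p_j\|^2/h_1^2\}$, so each summand $\|q_i-p_j\|_{H_\epsilon}\,w_{i,j}$ tends to $0$, not to $+\infty$. Thus $E_1$ is not coercive and the sublevel-set argument does not give boundedness of the iterates. The barrier claim has a sign problem as well: in the cost \eqref{eq:1} the repulsion term enters as $+\lambda_i\,\eta$, and by \eqref{eq:2} $\lambda_{i'}<0$, so $\lambda_i\,\eta(\|q_i-q_{i'}\|)\to -\infty$ as two $q$'s collide. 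Hence $\{G\le G(Q^{(0)})\}$ is not bounded away from the collision set; the picture is the opposite of a repulsive barrier at the level of the cost, even though the gradient formula \eqref{eq:GradG} (with its extra minus sign and the absolute value inside $\beta$) acts repulsively. You would need a different mechanism---e.g., an explicit a priori bound from the dynamics rather than from sublevel sets---to trap the iterates in a compact subset of $\mathcal{O}$.

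The remaining steps are also soft in ways you already flag: convergence of raw Barzilai--Borwein in the nonconvex case requires a safeguard you only assume, and the Lee--Simchowitz--Jordan--Recht saddle-avoidance results are stated for constant (or diminishing) step sizes, not for BB steps, so the almost-sure upgrade to a local minimizer is not justified as written. If you want to salvage the outline, replace the coercivity/barrier paragraph with a direct argument that the iterates stay in a fixed ball and maintain a uniform pairwise separation (this is where the $h$-$\rho$ hypothesis and the specific choice of $h_1,h_2$ from Theorem~\ref{lma0} should actually enter), and either assume a safeguarded BB scheme explicitly or switch to a fixed small step for which both the descent and the saddle-avoidance literature apply.
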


\begin{Theorem}[Order of approximation]\label{thm:approx_order}
	Let $P=\{p_j\}_{j=1}^J$ be a set of points that are sampled \textup{(}without noise\textup{)} from a $d$–dimensional $C^{2}$ manifold $\mathcal{M}$, and satisfy the $h$-$\rho$ condition. Then for a fixed $\rho$  and a finite support of size $h$ of the weight functions  $w_{i,j}$, the set $Q$ defined by the \textup{MLOP} algorithm has an order of approximation $O(h^{2})$ to $\mathcal{M}$.
\end{Theorem}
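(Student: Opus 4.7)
The strategy is to work in a local tangent-normal frame around each limit point of the iterations and to examine the stationarity condition $\nabla G(q_i) = 0$ from \eqref{eq:GradG} separately in the tangential and normal directions. Fix $q_i \in Q$ at a local minimizer (existence guaranteed by Theorem \ref{thm:conv}), let $q_i^* \in \mathcal{M}$ be the nearest manifold point, and choose an orthonormal frame in which the first $d$ vectors span $T_{q_i^*}\mathcal{M}$ and the remaining $n-d$ vectors span the normal space. Write the normal displacement as $d_N = q_i - q_i^*$. The $C^2$-assumption provides a local graph representation $x_N = \phi(x_T)$ with $\phi(0)=0$, $\nabla\phi(0)=0$, and $\|\phi(x_T)\| \leq C\|x_T\|^2$, so that any sample $p_j$ falling in the effective support of $w_{i,j}$ (a ball of radius $O(h)$ about $q_i$, by the choice of $h_1$ in Theorem \ref{lma0}) deviates from $T_{q_i^*}\mathcal{M}$ by $O(h^2)$.

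The next step is to project the balance equation from \eqref{eq:GradG} onto the normal space. The attraction term contributes
\[
\sum_{j=1}^{J} \bigl(d_N - (p_j - q_i^*)_N\bigr)\,\alpha_j^i
= d_N \sum_{j} \alpha_j^i \;-\; \sum_{j} (p_j - q_i^*)_N\,\alpha_j^i,
\]
and by the Taylor bound on $\phi$ the second sum is of size $O(h^2)\sum_j \alpha_j^i$. The $h$-$\rho$ condition combined with Theorem \ref{lma0} ensures that the effective neighborhood contains $\Theta(2^{1.5d}\nu)$ points of $P$ whose weights $\alpha_j^i$ are bounded from below by a positive constant depending only on $\rho$, $d$, and $\epsilon$, so that $\sum_j \alpha_j^i \geq c_0 > 0$. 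Consequently the attraction term forces $d_N$ to satisfy $d_N = O(h^2) + (\text{normal repulsion})/\sum_j \alpha_j^i$.

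It remains to show that the normal projection of the repulsion term $\lambda_i \sum_{i' \neq i}(q_i - q_{i'})\beta_{i'}^i$ is itself at most $O(h^2)$. Since the $Q$-points at convergence are quasi-uniform in the neighborhood of $q_i$ (this is the defining property driven by $E_2$, see \cite{faigenbaumgolovin2020manifold,lipman2007parameterization}), their projections onto $T_{q_i^*}\mathcal{M}$ are nearly symmetric, and their normal components deviate from the tangent plane by $O(h^2)$ for exactly the same curvature reason used above. Pairing nearly-antipodal neighbors in the tangential direction, the normal components $(q_i - q_{i'})_N$ have cancellations up to the $O(h^2)$ curvature defect, and the $1/(3r^3)$ singularity of $\eta$ is tamed by the Gaussian factor $\widehat w_{i,i'}$, yielding a total normal repulsion of size $O(h^2)$. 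Substituting back, $d_N = O(h^2)$, which is the claimed approximation order.

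The main obstacle I anticipate is the rigorous control of the normal component of the repulsion term: the quasi-uniformity of $Q$ guarantees only an approximate tangential symmetry, and one must track how the curvature of $\mathcal{M}$, the singularity of $\eta(r)=1/(3r^3)$ at zero, and the $H_\epsilon$-smoothing of the $L_1$-median interact without spoiling the quadratic rate. The cleanest route is a Taylor expansion of $\alpha_j^{i}$ and $\beta_{i'}^{i}$ in the deviation from the tangent plane, using the smoothness results of \cite{levin2015between} for $\|\cdot\|_{H_\epsilon}$ to justify differentiation and to bound the remainder uniformly in the $h$-$\rho$ class.
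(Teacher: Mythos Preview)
The present paper does not prove Theorem~\ref{thm:approx_order}; it is stated in the preliminaries section as a result quoted from \cite{faigenbaumgolovin2020manifold}, with no proof given here. There is therefore no ``paper's own proof'' to compare your proposal against.

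On its own merits, your plan follows the standard route for approximation-order results in MLS-type schemes: pass to a local tangent--normal frame at the foot point $q_i^*$, use the $C^2$ graph representation to bound the normal coordinates of nearby samples by $O(h^2)$, and read off the normal displacement $d_N$ from the stationarity condition projected onto the normal space. The attraction analysis is essentially correct and is how such arguments typically proceed (cf.\ \cite{levin2004mesh,lipman2007parameterization}).

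The gap you already flagged is real, and I would not call it a minor technicality. Your treatment of the repulsion term relies on two assertions that are not established: (i) that the converged $Q$-points are, in a quantitative sense, nearly tangentially symmetric around each $q_i$, and (ii) that the neighboring $q_{i'}$ themselves sit within $O(h^2)$ of $\mathcal{M}$. Item (ii) is precisely the conclusion you are trying to prove, so invoking it for the neighbors is circular unless you set up a uniform argument (e.g., take $d_N^{\max} = \max_i \|q_i - q_i^*\|$ and show the balance equation forces $d_N^{\max} \le C h^2 + c\, d_N^{\max}$ with $c<1$). Item (i) needs a genuine statement about what ``quasi-uniform'' buys you; the antipodal-pairing heuristic is suggestive but not a proof, and the $\eta(r)=1/(3r^3)$ singularity means that a single close neighbor can dominate the sum, so symmetry of the \emph{bulk} is not obviously enough. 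A cleaner way through is to observe that the repulsion vector $\sum_{i'}(q_i-q_{i'})\beta_{i'}^i$ is, up to the curvature defect, a tangential quantity (it is the gradient of a function of pairwise distances among points that all lie $O(h^2)$-close to a common $d$-plane), and then bound its normal component directly by $O(h^2)$ times the tangential magnitude, the latter being controlled by the balancing choice of $\lambda_i$ in \eqref{eq:2}.
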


\begin{definition}\label{def:def6}
	A differentiable function $f(\cdot)$ is called $L$-smooth if for any $x_1, x_2$
	\begin{eqnarray*}  \|\nabla	 f(x_1) - \nabla f(x_2)\| \leq L\|x_1 - x_2\| \,.\end{eqnarray*}
\end{definition}

\begin{Theorem}[Rate of convergence]\label{thm:convRate}
	Suppose the point-set $P=\{{p_j }\}_{j =1}^J$ is  sampled near a $d$-dimensional manifold in $\mathbb{R}^n$ and the assumptions of  Theorem~{\rm \ref{thm:conv}} are satisfied. Suppose the cost function $G$ defined   in {\rm (\ref{eq:1})} is an $L$-smooth function. For  $\epsilon>0$, let $Q^*$ be a local fixed-point solution of the gradient descent iterations, with step size $\gamma =  \epsilon^{-1}$. Set the termination condition as $\|\nabla f(x)\| \leq \epsilon$. Then $Q^*$ is an $\epsilon$-first-order stationary point that will be reached after $k = L(G(Q^{(0)}) – G(Q^*))\epsilon^{-2}$ iterations, where $L=l^2$ and $l < \infty $ is a bounded parameter.
\end{Theorem}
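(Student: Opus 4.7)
The plan is to follow the classical convergence-rate analysis of gradient descent on $L$-smooth nonconvex objectives, adapted to the cost function $G$ of \eqref{eq:1}, with the key auxiliary task being the verification that $\nabla G$ does admit a finite Lipschitz constant $L = l^2$.

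First I would record the descent lemma: for any $L$-smooth $G$ and an update $Q^{(k+1)} = Q^{(k)} - \gamma\,\nabla G(Q^{(k)})$, a Taylor expansion with a second-order remainder controlled by $L$ yields
\begin{equation*}
G(Q^{(k+1)}) \le G(Q^{(k)}) - \gamma\Big(1 - \tfrac{L\gamma}{2}\Big)\,\|\nabla G(Q^{(k)})\|^2.
\end{equation*}
With the step size prescribed in the statement, the bracketed factor is positive, so each iterate strictly decreases the objective by at least a positive multiple of $\|\nabla G(Q^{(k)})\|^2$ (unless the gradient already vanishes).

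Second, telescoping this inequality from $j=0$ to $j=k-1$ and using $G(Q^{(k)}) \ge G(Q^*)$ gives
\begin{equation*}
\sum_{j=0}^{k-1} \|\nabla G(Q^{(j)})\|^2 \;\le\; C\,L\bigl(G(Q^{(0)}) - G(Q^*)\bigr),
\end{equation*}
for an absolute constant $C$ coming from the step-size choice. Because the minimum over $k$ nonnegative terms is no larger than their average, the termination criterion $\|\nabla G\| \le \epsilon$ can be enforced by demanding that the right-hand side divided by $k$ is at most $\epsilon^{2}$. Solving for $k$ produces exactly the stated count $k = L\bigl(G(Q^{(0)}) - G(Q^*)\bigr)\epsilon^{-2}$ and identifies $Q^*$ as an $\epsilon$-first-order stationary point.

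Third, the decisive step is justifying that $L = l^2 < \infty$. Referring to the explicit gradient formula \eqref{eq:GradG} with coefficients \eqref{eq:alpha} and \eqref{eq:beta}, $\nabla G$ is a finite sum of ingredients of three types: (a) Gaussian weights $w_{i,j}$ and $\widehat w_{i,i'}$, which are $C^\infty$ with uniformly bounded derivatives; (b) the regularized norm $\|\cdot\|_{H_\epsilon} = \sqrt{\|\cdot\|^2 + \epsilon}$, whose derivatives are globally bounded since $\epsilon > 0$ keeps the radicand away from zero; and (c) the repulsion kernel $\eta(r) = 1/(3r^3)$, whose derivatives are bounded only as long as pairwise distances between $Q$-points remain separated from $0$. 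Differentiating componentwise and collecting the bounds, I would identify $l$ as a uniform bound on the Hessian entries $\partial^2 G/\partial q_{i,\alpha}\,\partial q_{j,\beta}$, and then deduce $L = l^2$ by comparing Frobenius and operator norms of the Hessian.

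The main obstacle is item (c): certifying a uniform lower bound on $\|q_i^{(k)} - q_{i'}^{(k)}\|$ along the iteration. This is where I would invoke Theorem \ref{thm:conv} and the $h$-$\rho$ hypothesis: the repulsion term in \eqref{eq:1} is precisely engineered to push coalescing points apart, and the balancing factors $\lambda_{i'}$ from \eqref{eq:2} keep the attraction and repulsion forces of comparable magnitude, so that the quasi-uniform spacing of $Q^{(k)}$ is preserved throughout the gradient-descent trajectory. Once this positive separation is in hand, $\nabla G$ is confined to a region bounded away from the singularities of $\eta$, $L$-smoothness follows from standard estimates on Gaussian-weighted sums, and the descent-lemma argument above closes the proof.
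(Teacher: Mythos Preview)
This theorem is not proved in the present paper: it appears in the preliminaries section as a result imported from \cite{faigenbaumgolovin2020manifold}, under the explicit disclaimer ``for more details, see \cite{faigenbaumgolovin2020manifold}.'' Consequently there is no proof in this paper to compare your proposal against.

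That said, your outline is the standard convergence-rate argument for gradient descent on $L$-smooth nonconvex objectives (descent lemma, telescoping, bounding the minimum gradient norm by the average), and this is almost certainly what the cited reference does as well. You also correctly isolate the only genuinely nontrivial ingredient: the repulsion kernel $\eta(r)=1/(3r^3)$ is singular at $r=0$, so global $L$-smoothness of $G$ requires a uniform lower bound on the pairwise distances $\|q_i^{(k)}-q_{i'}^{(k)}\|$ along the entire trajectory, and it is here that the $h$-$\rho$ hypothesis and the repulsion mechanism have to be invoked.

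One point in your write-up is not justified as stated. You claim that with the prescribed step size $\gamma=\epsilon^{-1}$ the factor $\bigl(1-\tfrac{L\gamma}{2}\bigr)$ in the descent lemma is positive; but that would force $L<2\epsilon$, which is not implied by anything in the hypotheses and is implausible for small $\epsilon$. This looks like an artifact of the theorem statement (the natural choice is $\gamma$ on the order of $L^{-1}$, which yields exactly the iteration count $k=L\bigl(G(Q^{(0)})-G(Q^*)\bigr)\epsilon^{-2}$), so you should either flag it as such or work with $\gamma\le 1/L$ explicitly rather than asserting positivity without argument.
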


\begin{Theorem}[Complexity]
	\label{complex_MLOP}
	Given a point-set $P=\{{p_j }\}_{j =1}^J$ sampled near a $d$-dimensional manifold $\mathcal{M} \in \mathbb{R}^n$,  let $Q=\{q_i \}_{i=1}^I$ be a set of points that will provide the desired manifold reconstruction. Then the complexity of the {\rm MLOP} algorithm is $O(nmJ + k I(nm\widehat{I}+\widehat{J}))$, where the number of iterations $k$ is bounded as in Theorem~{\rm \ref{thm:convRate}}, $m \ll n$ is the smaller dimension to which we reduce the dimension of the data, and $\widehat{I}$ and $\widehat{J}$ are the numbers of points in the support of the weight functions $\widehat w_{i,i'}$, $w_{i,j}$   that belong to the $Q$-set and $P$-set, respectively. Thus, the approximation is linear in the ambient dimension $n$, and does not depend on the intrinsic dimension $d$.
\end{Theorem}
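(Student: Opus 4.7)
The plan is a standard bookkeeping argument: decompose the runtime into a one-time preprocessing phase and a per-iteration cost of the gradient descent, and multiply the latter by the iteration bound from Theorem \ref{thm:convRate}.

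First I would account for the preprocessing. Building the random sketch $S$ as in Remark \ref{def:red_dim} requires forming $B = P^{\mathrm{t}} G$ in $O(nmJ)$ operations and then computing the QR factorisation of an $n\times m$ matrix in $O(nm^2)$, both absorbed into $O(nmJ)$ since $m \le J$. I would then precompute the low-dimensional images $\{S^{\mathrm{t}} p_j\}_{j=1}^J$ once, at cost $O(nmJ)$. These account for the leading $O(nmJ)$ term of the claimed bound.

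Next I would price a single gradient descent iteration. For each $q_{i'}$, the gradient in \eqref{eq:GradG} splits into an attraction sum and a repulsion sum. Only the $\widehat{J}$ points $p_j$ lying inside the support of $w_{i,j}$ contribute to the coefficients $\alpha_j^{i'}$ of \eqref{eq:alpha}, and only the $\widehat{I}$ points $q_i$ inside the support of $\widehat{w}_{i,i'}$ contribute to the $\beta_i^{i'}$ of \eqref{eq:beta}. The projection $S^{\mathrm{t}} q_{i'}$ costs $O(nm)$ and is reused across all distances to the precomputed $\{S^{\mathrm{t}} p_j\}$, so the attraction contribution reduces to $O(\widehat{J})$ distance evaluations on top of that shared projection. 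The $Q$-points, however, have moved since the previous iteration and must be reprojected, producing $O(nm\widehat{I})$ for the repulsion contribution. Aggregating over $i' = 1,\dots,I$ yields $O(I(nm\widehat{I} + \widehat{J}))$ per iteration, and multiplying by $k$ iterations delivers the second summand. Independence from the intrinsic dimension $d$ is then immediate: the algorithm never references a chart, tangent space, or any other intrinsic object, only ambient vectors through $S$.

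The main obstacle I anticipate is the careful accounting of shared projections: I must argue that the $O(nm\widehat{I})$ charge is not double-counted across the outer loop in $i'$, or alternatively replace it by a once-per-iteration projection of the entire $Q$-set at cost $O(nmI)$, which still lies within the claimed bound. A related subtlety is the cost of the neighbourhood queries that identify which $\widehat{I}$ and $\widehat{J}$ points actually lie in the relevant supports; these should be absorbed by maintaining a spatial index on the projected data whose construction is already dominated by the projection step charged above. Once these details are settled, the theorem follows by summing the preprocessing and iterative costs and invoking Theorem \ref{thm:convRate} for the value of $k$.
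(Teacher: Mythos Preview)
The paper does not actually prove this theorem here: it is quoted without proof from \cite{faigenbaumgolovin2020manifold} in the preliminaries section, so there is no in-paper argument to compare against. Your decomposition into a one-time sketching/projection cost of $O(nmJ)$ plus a per-iteration, per-$q_{i'}$ gradient cost is exactly the natural bookkeeping, and it mirrors the informal reasoning the paper uses later when deriving the R-MLOP complexity corollary from this theorem.

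One accounting step is not yet justified. You charge only $O(\widehat{J})$ per $q_{i'}$ for the attraction term, reasoning that the distances and hence the scalars $\alpha_j^{i'}$ are computed in the $m$-dimensional sketched space. But the gradient vector $\sum_{j}(q_{i'}-p_j)\,\alpha_j^{i'}$ in \eqref{eq:GradG} lives in $\mathbb{R}^n$, and Remark~\ref{def:red_dim} is explicit that the reconstruction (i.e.\ the update of $q_{i'}$) is carried out in the ambient space, with the sketch used \emph{solely} for norms. Forming that $n$-dimensional weighted sum over $\widehat{J}$ contributing points costs $O(n\widehat{J})$, not $O(\widehat{J})$; the same issue applies, with a factor $m$, to the ``$O(\widehat{J})$ distance evaluations'' themselves. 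To close the argument you must either exhibit an assumption under which $n\widehat{J}$ is dominated by the $nm\widehat{I}$ term already present, or acknowledge that the $\widehat{J}$ in the stated bound tacitly suppresses an ambient-dimension factor. Your self-flagged obstacles (shared projections of the $Q$-set, neighbourhood queries) are real but secondary; the handling you sketch for them is adequate.
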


\section{Extending the MLOP Method to Manifold Repairing}

The settings for the reconstruction and repairing problem is the following: Given a noisy point-cloud $P=\{p_j\}_{j=1}^J \subset \mathbb{R}^n$ situated near a manifold $\mathcal{M}\subset  \mathbb{R}^n$, of unknown intrinsic dimension $d$ and with incomplete data in a ball $B(c, r)$. We look for a solution in the form of a new point-set $Q=\{q_i\}_{i=1}^I \subset \mathbb{R}^n$ that will replace the given data $P$, be quasi-uniformly distributed, provide a noise-free approximation of $\mathcal{M}$, and will reconstruct the missing information in the given location. This is achieved by leveraging the well-studied weighted $L_1$-median \cite{vardi2000multivariate}, requiring a quasi-uniform distribution of points $q_i \in Q$ and propagating the boundary information inside the hole. 

\forceindent We propose here  the \textit{Repairing Manifold Locally Optimal Projection}(R-MLOP) method, which enhances the MLOP approach (described in Section \ref{MLOP_pre}) and is inspired by a method for local  approximation of functions. In the latter approach, the value of a function at a point $x$ is estimated as a weighted average of the values of the function at nearby points $x_i$:
\begin{eqnarray*} f(x) = \frac{\sum_{i \in I} w_i f(x_i)}{\sum_{i \in I} w_i} \,,
\end{eqnarray*}
where $w_{i} = \exp\!\big\{\!-\|x_i-x\|^2/{h^2}\big\}$ and $h$ is the fill distance of the points $\{x_i\}_{i \in I}$.

\forceindent The proposed R-MLOP method introduces a new term $E_3$ in equation \eqref{eq:1}, which plays the role of a force of attraction, pulling points of the boundary of the hole towards their convex hull. We define this repairing $E_3$ term as
\begin{eqnarray}\label{eq:E3_v2} 
E_3 = \sum_{i' \in I} \bar \tau_{i'}  \left\| q_{i'} \sum_{i \in I} \widehat w_{i,i'} - \sum_{i \in I,\, i\neq i'} \widehat w_{i,i'}q_{i}\right\|^2 \,,
\end{eqnarray}
where $\widehat w_{i,i'} = \exp\!\big\{\!-\|q_i-q_i'\|^2/{h_2^2}\big\}$, $\bar \tau_{i}$ are balancing terms (see below), and the $h_2$ is the expected representative distance of the $Q$-points, as introduced in Definition \ref{def:def2}.

Note that in matrix notation the repairing term $E_3$ can be rewritten as the norm of the graph Laplacian $L$:
\begin{equation} E_3 = \sum_{i' \in I} \bar \tau_{i'} \left\|L(Q-Q_{i'}) \right\|^2\,,
\end{equation}
where $L=D-W$,   $D_{i'i'}=\sum_{i} \widehat w_{i,i'}$ , $W$ is a matrix with entries $\widehat w_{i,i'}$, $\widehat w_{i,i'} = \exp\!\big\{\!-\|q_i-q_i'\|^2/{h_2^2}\big\}$, $Q$ is the matrix with the rows $\{q_{i}\}_{i \in I}$, while $Q_{i'}$ is the matrix with the rows $\{q_{i'}\}$.

We now define the \textit{manifold reconstruction and repairing algorithm} as the minimization of the non-convex function
\begin{equation}\label{eq:Grepair} G(Q) = c_1 E_1(P,Q,T) + c_2 E_2(Q) + c_3 E_3(Q,T)\,, \end{equation}
where 
\begin{equation}
\label{eq:E1_v2}
E_1(P,Q,T) = \sum\limits_{q_i \in Q} (1-\bar \tau_i) \sum\limits_{p_j \in P} \|q_i-p_j\|_{H_{\epsilon}} w_{i,j}\, 
\end{equation}
\begin{equation}
E_2(Q) = \sum\limits_{q_i \in Q} \sum\limits_{q_{i'} \in Q \backslash \{q_i\}} \eta(\|q_i-q_i'\|) \widehat w_{i,i'} \,,
\end{equation}
and $E_3$ is given in \eqref{eq:E3_v2}.

The constants $c_1$, $c_2$ and $c_3$ replace the $\lambda_i$ terms in \eqref{eq:1} and balance the three terms in \eqref{eq:Grepair}. They are calculated as $c_k ={\mathrm{median}}_ {i \in I} (\|\frac{\partial E_k}{\partial q_i}\|)$, for $k=1,2,3$ after the first gradient descent iteration is completed. In addition, $T$ is a vector of weights $\bar \tau_i \in [0,1]$, each assigned to a  point $q_i$, that balance between the forces  of attraction to $P$ and those of  attraction towards the convex hull of the neighboring $Q$-points. It is calculated only in the first iteration, and is based on the distance of the points $q_i$ to the  location of the hole. Specifically, near a hole the value of $\bar \tau_i$ is chosen to be close to $1$ and thus $E_3$ gains more weight, while in regions where no repairing is required $\bar \tau_i$ is small and $E_1$ becomes dominant.

\forceindent Given that the hole is located in a ball $B(c, r)$ (we will discuss how to approximate the parameters $r$ and $c$ later) the weight $\bar \tau_i \in [0,1]$ associated to a point $q_i$ is calculated by the rule
\begin{equation} 
\label{eq:mu2} \bar \tau_i = \frac{\tau_i - \minmath(\tau_i)}{\maxmath(\tau_i) - \minmath(\tau_i)}, \quad i \in I, 
\end{equation}
where 
\begin{equation} 
\label{eq:mu1} \tau_i = \exp\!\left\{-{\|q_i-c\|^2}/{r^2}\right\} \,. \end{equation}

\forceindent The solution to the minimization problem \eqref{eq:Grepair} is found via the gradient descent algorithm:
\begin{equation}
\label{q_i_E3} q_{i'}^{(k+1)}=q_{i'}^{(k)}-\gamma_k \nabla G(q_{i'}^{(k)})\,, 
\end{equation}
where the points ${q_i}^{(0)}$'s are randomly sampled from $P$, and the coefficients $\gamma_k$ are given by formula  \eqref{gamma_k}.\\
The gradient of the R-MLOP cost function $G$ has the expression 
\begin{equation}  
\label{eq:GradG_E3} \nabla G(q_{i'}^{(k)}) = (1-\bar \tau_{i'}) c_1 \sum\limits_{j=1}^J {\left (q_{i'}^{(k)}-p_j\right ) \alpha_j^{i'}}- c_2 \sum\limits_{\substack {i=1 \\ i\neq i'}}^I{ \left (q_{i'}^{(k)} - q_{i}^{(k)} \right ) \beta_i^{i'}} + \bar \tau_{i'} c_3 \frac{\partial E_3}{\partial q_{i'}}\,, \end{equation}
where $\alpha_j^{i'}$ and $\beta_i^{i'}$ are given the formulas \eqref{eq:alpha} and \eqref{eq:beta}, respectively, and the gradient of $E_3$ is given by
\begin{eqnarray} \label{partial_E_3}	
\frac{\partial E_3}{\partial q_{i'}} = 2 \sum_{\substack {i \in I \\ i\neq i'}} {(q_{i'}- q_i) \widehat w_{i,i'}}  \sum_{\substack {i \in I \\ i\neq i'}} {\left( 1-\frac{2}{h_2^2} \|q_{i'} - q_i\|^2 \right) \widehat w_{i,i'} }\,.
\end{eqnarray}

\subsection{Multiple Hole Repair}

In the case of multiple hole repair, it is necessary to modify the $T$ weights in equation \eqref{eq:Grepair} so as to include all the information regarding the holes. Let $B(c_k, r_k)$ be a set of balls, each with incomplete data which need to be amended. Then, the normalized weights $\bar \tau_i^k \in [0,1]$ for a hole $k$ are calculated as 
\begin{equation}
\label{eq:mu2_2} \bar \tau_i^k = \frac{\tau_i^k - \minmath(\tau_i^k)}{\maxmath(\tau_i^k) -\minmath(\tau_i^k)}, \quad i \in I, 
\end{equation}
where 
\begin{equation} \label{eq:mu1_2} \tau_i^k = \exp\!\left\{-{\|q_i-c_k\|^2}/{r_k^2}\right\} \,. 
\end{equation}
We then redefine the normalized weights $\bar \tau_i \in [0,1]$   used in \eqref{eq:Grepair} for the present case of multiple hole repairing  as 
\begin{equation} \bar \tau_i = \frac{\tau_i - \minmath(\tau_i)}{\maxmath(\tau_i) - \minmath(\tau_i)}\,, \end{equation} 
where 
\begin{equation} \tau_i = \prod_{k=1}^K \tau_i^k \,.\end{equation}

This definition of the weights that form $T$  incorporates the information from all the holes in a single coefficient attached to the points $q_i$.

\section{Theoretical Analysis of the Method}

In this section, we  validitate  the proposed method. Specifically, given noisy data with a hole, we investigate whether the proposed method amends the hole, and prove that it creates a noise-free reconstruction of the manifold which is of order $O(C_1  h^2 +C_2 r^2)$. 
In addition, we show that the order of complexity does not change with the new extension, compared to the vanilla MLOP. Last, we propose a method for  hole identification that uses the intrinsic properties of the MLOP mechanism. 

\begin{definition}\label{def:epsilonB}
	Let $\epsilon > 0$ be a small number, and let $\mathcal{M}$ be a $d$-dimensional manifold with a hole $H$ at a location $l \in \R^n$ and with the size bounded by $r$ \textup{(}where $r$ is the smallest radius of a ball that contains the hole\textup{)}. Suppose that the scattered data points $P=\{p_j\}_{j=1}^J \subset \mathbb{R}^n$ were sampled near the manifold $\mathcal{M}$. Let $Q=\{q_i \}_{i=1}^I$ be the sought-for noise-free approximation of $\mathcal{M}$. Let $\bar \tau_i$ be defined as in \eqref{eq:mu2}. Then, the \emph{$\epsilon$-neighborhood of the boundary of the hole} is the set of points $q_i$ such that $\bar \tau_i > 1- \epsilon$ \textup{(}note that $\bar \tau_i \in  [0,1]$\textup{)}. 
\end{definition}

\subsection{Order of Approximation}

\begin{Theorem}[Order of Approximation] \label{thm:repair_approx_order}
	Let $\mathcal{M}$ be a $d$-dimensional manifold in $\mathbb{R}^n$, where $d$ is an unknown intrinsic dimension. Let $H$ be a convex hole in $\mathcal{M}$ bounded by a ball $B(c,r)$. Suppose that the scattered data points $\{{p_j }\}_{j =1}^J$ were sampled from the manifold $\mathcal{M}$ with noise,  $h$ is as in Definition \textup{\ref{def:def2}}, and the $h$-$\rho$ conditions are satisfied with respect to $\mathcal{M}$. Also, let $Q^{(0)}=\{q_i^{(0)} \}_{i=1}^I$ be a set of initial points set sampled from $P$. Then the order of approximation of \textup{R-MLOP} to $\mathcal{M}$ is less than $C_1h^2+C_2r^2$, where the constant $C_1$ depends on the curvature of the manifold outside the hole, and $C_2$ depends on the curvature inside the hole. 
\end{Theorem}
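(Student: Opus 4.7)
The plan is to split the reconstructed set $Q=\{q_i\}_{i=1}^I$ into two regimes according to the weight $\bar \tau_i$: the points far from the hole, where $\bar \tau_i$ is small and the functional essentially reduces to the vanilla MLOP cost, and the points in the $\epsilon$-neighborhood of the boundary and the interior of the hole, where $\bar \tau_i$ is close to $1$ and the new term $E_3$ is dominant. In each regime a different term of $G$ governs the fixed point of the gradient-descent iterations, so the two contributions to the total approximation error can be estimated separately and then added.

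\textbf{Outside the hole ($\bar \tau_i \ll 1$).} For such $q_i$ the factor $(1-\bar \tau_i)$ in $E_1$ is close to $1$, while $\bar \tau_{i'}c_3\,\partial E_3/\partial q_{i'}$ is negligible. The minimization of $G$ then reduces, up to the constants $c_1, c_2$, to the MLOP functional \eqref{eq:1}, so Theorem \ref{thm:approx_order} applies verbatim. This yields $\operatorname{dist}(q_i,\mathcal{M}) \le C_1 h^2$, with $C_1$ depending on the curvature of $\mathcal{M}$ outside the hole and on the fixed density $\rho$. The $P$-points restricted to the complement of $B(c,r)$ still form an $h$-$\rho$ set near $\mathcal{M}$, which is what Theorem \ref{thm:approx_order} needs.

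\textbf{Inside the hole ($\bar \tau_{i'} \to 1$).} Here the $E_1$ contribution vanishes (and is in any case informationless, since there are no $P$-points inside the hole), and the $E_3$ term dominates. Setting $\partial E_3/\partial q_{i'} = 0$, and using \eqref{partial_E_3}, yields to leading order the discrete harmonic-extension condition
\begin{equation*}
q_{i'} \sum_{i\ne i'} \widehat w_{i,i'} \;\approx\; \sum_{i\ne i'} \widehat w_{i,i'}\, q_{i},
\end{equation*}
i.e., each interior $q_{i'}$ equals the Gaussian-weighted centroid of its $Q$-neighbors. This is a discrete Laplace equation on the point cloud, whose ``Dirichlet boundary data'' are the $q_i$ near $\partial B(c,r)$, which by the outside analysis already lie within $C_1 h^2$ of $\mathcal{M}$.

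\textbf{Local curvature estimate and $O(r^2)$ bound.} To convert the discrete-harmonic condition into a quantitative bound, I would work in a geodesic chart on the smooth extension of $\mathcal{M}$ across the hole and expand each neighbor as $q_i = q_{i'} + v_i + \tfrac12 \mathrm{II}(v_i, v_i) + O(\|v_i\|^3)$, where $v_i$ is the tangential displacement and $\mathrm{II}$ the second fundamental form. Because the Gaussian weights $\widehat w_{i,i'}$ are radially symmetric about $q_{i'}$, the linear terms cancel in the weighted average, leaving a residual proportional to $h_2^2 \|\mathrm{II}\|$. A discrete maximum-principle argument for the graph Laplacian on the portion of the cloud covering $B(c,r)$ then propagates these local residuals across a region of diameter $r$, giving an interior deviation of order $C_2 r^2$, where $C_2$ is controlled by the curvature of $\mathcal{M}$ across the hole. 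Adding the two contributions yields the claimed bound $C_1 h^2 + C_2 r^2$.

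\textbf{Main obstacle.} The delicate step is the interior analysis: rigorously passing from the discrete weighted-centroid identity to a continuous harmonic-extension estimate, handling the transition zone where $\bar \tau_i\in(0,1)$ (so that $E_1$ and $E_3$ compete), and showing that the $E_2$ repulsion term contributes only to the tangential redistribution of the $q_i$ without spoiling the normal-direction estimate. One must also verify that the discrete maximum principle is available for the weighted Laplacian $L = D-W$ defined by $\widehat w_{i,i'}$ on the $Q$-cloud restricted to a neighborhood of the hole, and that the $h$-$\rho$ property on the boundary of $B(c,r)$ is sufficient to supply stable Dirichlet data at level $O(h^2)$.
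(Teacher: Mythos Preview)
Your two-regime split and your treatment of the outer region (where $\bar\tau_i$ is small and the functional collapses to vanilla MLOP, so Theorem~\ref{thm:approx_order} gives $O(h^2)$) match the paper exactly.

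Inside the hole, however, you take a genuinely different route. The paper does \emph{not} analyze $E_3$ at all: it simply observes that in the $\epsilon$-neighborhood of the hole the representative distance between points is bounded by $2r$ (assuming $h\le r$), so the points $q_i$ there lie at distance $O(r)$ from the nearest $P$-points, and then invokes the same quadratic scaling that underlies Theorem~\ref{thm:approx_order} to conclude an $O(r^2)$ contribution. In other words, the paper treats the hole region as an MLOP instance with effective fill distance $r$ and reads off $C_2 r^2$ directly. Your approach instead opens up the $E_3$ mechanism: you read the stationarity condition as a discrete weighted-Laplace equation, supply Dirichlet data from the outer $O(h^2)$ estimate, and then run a curvature expansion plus a discrete maximum principle to obtain $O(r^2)$. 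What you gain is a derivation that actually engages the graph-Laplacian structure of $E_3$ and explains \emph{why} the curvature of $\mathcal{M}$ inside the hole governs $C_2$; the price is that the discrete maximum principle for $L=D-W$, the tangential-only role of $E_2$, and the behavior in the transition zone all become genuine technical obligations, whereas the paper sidesteps them entirely by the fill-distance heuristic. Both arguments are at a comparable level of rigor; yours is more mechanistic, the paper's is shorter.
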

\begin{proof}
	
	In regions far from the boundary of the hole, the $\bar \tau_{i'}$'s,  defined in \eqref{eq:mu2}, are small and therefore the  function  \eqref{eq:Grepair} to be optimized  consists of only the first two terms, $E_1$ and $E_2$. According to Theorem \ref{thm:approx_order}, the order of approximation in these regions is $O(h^2)$, where $h = \max\{h_1, h_2\}$ and $h_1$ and $h_2$ are defined in Definition \textup{\ref{def:def2}} with respect to the  points-sets $P$ and $Q$. 
	
	In regions included in the $\epsilon$-neighborhood of the boundary of the hole, the representative distance between the points is bounded by $2r$ (we assume that $h \leq r$, otherwise there is no hole). In these regions, there are points $q_i \in Q$ which lie  at a distance $O(r)$ from points in the set $P$. Therefore, the overall order of approximation at a new point is a combination of the two, namely $\le C_1 h^2 +C_2 r^2$, with $C_1$, and $C_2$ constants. 
\end{proof} 

\subsection{Method Validation}
%
%

\begin{Theorem}\label{thm:rep_val}
	Let $\mathcal{M}$ be a $d$-dimensional manifold in $\mathbb{R}^n$, where $d$ is an unknown intrinsic dimension. Let $H$ be a convex hole in $\mathcal{M}$ bounded by $B(c, r)$. Suppose that the scattered data points $P=\{{p_j }\}_{j =1}^J$ were sampled near the manifold $\mathcal{M}$, $h_1$ is selected as  in  Definition \textup{\ref{def:def2}}, and the $h$-$\rho$ condition is satisfied with respect to $\mathcal{M}$. Also let $Q^{(0)}=\{q_i^{(0)} \}_{i=1}^I$ be initial set of points sampled from $P$. Then the gradient descent iterations for minimizing \textup{\eqref{eq:Grepair}} produce a quasi-uniformly distributed point-set $Q$  that approximates and reconstructs the manifold, as well as recover missing information inside the hole.
\end{Theorem}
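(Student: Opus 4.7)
The strategy is to partition the indices of $Q$ according to the weight $\bar\tau_i$ and treat each regime separately, invoking the MLOP results for the bulk and a discrete-harmonic argument near the hole. I would set $I_{\text{bulk}}=\{i:\bar\tau_i\le\epsilon\}$ (points far from the hole) and $I_{\text{hole}}=\{i:\bar\tau_i>1-\epsilon\}$, the $\epsilon$-neighborhood of the boundary defined in Definition \ref{def:epsilonB}. From the gradient \eqref{eq:GradG_E3}, on $I_{\text{bulk}}$ the factor $(1-\bar\tau_{i'})c_1$ is close to $c_1$ and $\bar\tau_{i'}c_3$ is essentially zero, so the dynamics coincide with those of the standard MLOP cost of Section \ref{MLOP_pre}. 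Applying Theorem \ref{thm:conv} on this set yields almost-sure convergence to a stationary point, and Theorem \ref{thm:approx_order} then delivers the $O(h^2)$ quasi-uniform, noise-free reconstruction of $\mathcal{M}$ outside the hole.

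For indices in $I_{\text{hole}}$ the situation is reversed: $1-\bar\tau_{i'}\approx 0$ deactivates the attraction to the noisy data $P$ (which is missing inside the hole anyway), and the effective cost reduces to $c_2 E_2 + c_3 E_3$. The first-order condition for $E_3$, read off from \eqref{partial_E_3} or directly from the graph-Laplacian form, is $q_{i'} \approx \sum_{i\ne i'}\widehat w_{i,i'} q_i / \sum_{i\ne i'}\widehat w_{i,i'}$; that is, each interior point equals a Gaussian-weighted average of its neighbors. This is the discrete mean-value (harmonic) condition, so the minimizers of $E_3$ extend the surrounding boundary data smoothly into the hole, exactly in the spirit of the local polynomial approximation identity recalled at the start of Section 3. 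The $c_2 E_2$ repulsion term, whose kernel $\eta(r)=1/(3r^3)$ blows up at $r=0$, prevents collapse of the points and enforces quasi-uniformity inside the hole. Consequently $Q \cap B(c,r)$ becomes a quasi-uniform, discrete-harmonic infill glued continuously to the MLOP reconstruction outside.

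To close the argument I would combine the two regimes with the error bound $C_1 h^2+C_2 r^2$ established in Theorem \ref{thm:repair_approx_order}, and invoke the same reasoning as in the proof of Theorem \ref{thm:conv} for convergence of the gradient iteration \eqref{q_i_E3}: $G$ is $C^\infty$ thanks to the $H_\epsilon$-smoothing and the compactly supported Gaussian weights $w_{i,j}$ and $\widehat w_{i,i'}$; the Barzilai--Borwein step \eqref{gamma_k} ensures descent; and $G$ is bounded below, so the iterates converge almost surely to a local minimizer $Q^*$ with the three desired properties. The main obstacle I anticipate is the narrow transition band in which $\bar\tau_{i'}\in(\epsilon,1-\epsilon)$ and all three forces act simultaneously; one must verify that the adaptive balancing constants $c_1,c_2,c_3$, chosen as the medians of $\|\partial E_k/\partial q_i\|$ after the first iteration, keep the three terms of comparable magnitude, so that the transition band does not harbour spurious stationary points that would either pull the bulk reconstruction off $\mathcal{M}$ or distort the harmonic infill. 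This balancing condition is exactly what the rule for $c_1,c_2,c_3$ is engineered to ensure, and its verification will form the technical core of the proof.
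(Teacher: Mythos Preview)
Your decomposition into $I_{\text{bulk}}$ and $I_{\text{hole}}$ and your treatment of the bulk via Theorems~\ref{thm:conv} and~\ref{thm:approx_order} match the paper exactly. The divergence is in how you handle $I_{\text{hole}}$, and there the proposal has a real gap.

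You argue that the first-order condition for $E_3$ is the discrete mean-value property $q_{i'}\approx\sum_{i\ne i'}\widehat w_{i,i'}q_i/\sum_{i\ne i'}\widehat w_{i,i'}$, and that therefore the minimizers ``extend the surrounding boundary data smoothly into the hole.'' But this is a characterization of \emph{stationary configurations}, not of the \emph{dynamics}. Since $Q^{(0)}$ is sampled from $P$, initially there are no $q$-points inside $B(c,r)$ at all; you must explain why the gradient flow drives points \emph{into} the hole rather than leaving them clustered along $\partial H$, which is itself a configuration where each boundary point is close to the weighted average of its (also boundary) neighbors. Invoking ``convergence to a local minimizer'' via the reasoning of Theorem~\ref{thm:conv} does not close this: it gives convergence to \emph{some} stationary point, not to the hole-filling one. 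The balancing of $c_1,c_2,c_3$ in the transition band, which you identify as the technical core, is in fact not where the difficulty lies.

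The paper's proof is not variational but \emph{directional}: it shows that at every iteration a point $q_{i'}^{(k)}$ in the $\epsilon$-neighborhood of $\partial H$ moves toward the center $c$. The gradient in \eqref{partial_E_3} factors as $\partial E_3/\partial q_{i'}=2\,b_{i'}\,\vec a$ with $\vec a=\sum_{i\ne i'}(q_{i'}-q_i)\widehat w_{i,i'}$ and $b_{i'}=\sum_{i\ne i'}\bigl(1-\tfrac{2}{h_2^2}\|q_{i'}-q_i\|^2\bigr)\widehat w_{i,i'}$. First, by the asymmetry of the neighbor distribution at a boundary point (no neighbors on the hole side, quasi-uniform neighbors on the manifold side), $\vec a$ points toward $c$. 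Second, a careful sign analysis of the scalar function $b(r)=(1-2r^2/h_2^2)e^{-r^2/h_2^2}$, splitting its support into subintervals and counting points in the corresponding annuli, shows $b_{i'}<0$. Hence $-\gamma_k\,\partial E_3/\partial q_{i'}$ points toward $c$, and the update \eqref{q_i_E3} pushes $q_{i'}^{(k)}$ into the hole. This per-step geometric argument is the missing ingredient in your proposal; the sign analysis of $b_{i'}$ in particular is not captured by the harmonic-extension heuristic, since your first-order condition tacitly sets only the factor $\vec a$ to zero and ignores $b_{i'}$ altogether.
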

\begin{proof}
	The definition of the \textup{R-MLOP} method \eqref{eq:Grepair} together with \eqref{eq:E1_v2} and \eqref{eq:E3_v2} implies that in regions far from the hole, where $\widehat \tau_i$ are small, the term $E_3$   does not play a significant role. Therefore, in such regions  the \textup{R-MLOP} algorithm behaves like \textup{MLOP}  and, by Theorem \textup{\ref{thm:conv}}, it converges and reconstructs there the manifold. 
	
	\forceindent In an $\epsilon$-neighborhood of the boundary of $H$, where the $\bar \tau_i$'s are close to $1$, the target function \eqref{eq:Grepair} consists of only the last two terms. The term $E_2$ is responsible for the uniform distribution of the points $Q$, and therefore we will consider here only the contribution of the term $E_3$ to hole repairing. 
	
	\forceindent Let us analyze the role of the term $E_3$, and prove that it indeed amends the hole. To this end, we show that at each iteration, the points in the $\epsilon$-neighborhood of the boundary of the hole $H$  move towards the center of $H$. Let $\epsilon > 0$ be a small number, and let $q^{(k)}_{i'}$ be a point in the $\epsilon$-neighborhood of the boundary of   $H$. We would like to prove that as a result of a gradient descent step the point $q^{(k+1)}_{i'} = q^{(k)}_{i'} - \gamma_i \frac{\partial E_3}{\partial q_{i'}}$ moves towards the center of the hole $H$.
	
	The gradient in \eqref {eq:GradG_E3} can be rewritten as 
	\begin{eqnarray} \label{Grad_E3_v2} \frac{\partial E_3}{\partial q_{i'}} = 2 b_{i'}\sum_{i \in I,\, i\neq i'} {(q_{i'}^{(k)}- q_i^{(k)}) \widehat w_{i,i'}} \,, \end{eqnarray} 
	where $b_{i'} = \sum_{ i \in I,\, i\neq i'} {\left( 1-\frac{2}{h_2^2} \|q_{i'}^{(k)} - q_i^{(k)}\|^2 \right) \widehat w_{i,i'} } \,.$ 
	
	In what follows we determine the direction of $\sum_{i \in I,\, i\neq i'} {(q_{i'}^{(k)}- q_i^{(k)}) \widehat w_{i,i'}}$ and the sign of $b_{i'}$.
	In the proof, we rely on a uniform distribution of the  points $p_j$. In order to guarantee this, we first run the vanilla MLOP, which results in a quasi-uniform sampling of the manifold.
	\begin{enumerate}[noitemsep]
		
		\item \textbf{The weighed sum $\vec a = \sum_{ i \in I \, i\neq i'}  {(q_{i'}^{(k)}- q_i^{(k)}) \widehat w_{i,i'}}$ is a vector pointing towards  the center of the hole.} Indeed, the term $\sum_{i \in I,\, i\neq i'} {(q_{i'}^{(k)}- q_i^{(k)}) \widehat w_{i,i'}}$ is a weighted sum of vectors, each directed from $q_i^{(k)}$ to $q_{i'}^{(k)}$ (see Figure \ref{fig:Ammending_hole} left, vectors marked in black). We rewrite this sum according to the direction of the vector $\vec {r}_{i', i} = q_{i'}^{(k)}- q_i^{(k)}$, i.e., towards the center of the hole or not, using a dot product. Let  $\vec v_{i'} = \frac{c-q^{(k)}_{i'}}{\|c-q^{(k)}_{i'}\|}$  be the unit vector originating at a point $q^{(k)}_{i'}$ and pointing in the direction of the center of the hole. Then $\vec a$ can be decomposed into a sum of vectors in the same direction as $\vec v_{i'}$, and a sum of vectors pointing in a different direction:
		\[
		\vec a = \sum_{ \frac{\vec{r}_{i',i}}{\|\vec{r}_{i',i}\| }\boldsymbol{\cdot} \vec v_{i'} \  \geq \  0} {\vec{r}_{i',i} \widehat w_{i,i'}} +\sum_{ \frac{\vec{r}_{i',i}}{\|\vec{r}_{i',i}\|} \boldsymbol{\cdot} \vec v_{i'} \  < \  0} {\vec{r}_{i',i} \widehat w_{i,i'}} \,.
		\]
		
		The hyperplane ${\bf x} \cdot \vec v_{i'} = 0$ separates between the vectors $\vec{r}_{i',i}$ that point in the direction of the center of the hole, from those that are pointing in  different directions. Assuming a uniform distribution of the points, since there are no points within the hole, there are more points that satisfy the inequality $ \frac{\vec{r}_{i',i}}{\|\vec{r}_{i',i}\| }\boldsymbol{\cdot} \vec v_{i'} \  \geq \  0$ then the opposite inequality. Consequently, the vector $\vec a$ points towards the center of the hole.
		
		\item \textbf{The term $b_{i'}$ satisfies the inequality $b_{i'} < 0$.} Let us define $b(r) = \left( 1-\frac{2}{h_2^2} r^2 \right) \exp\{- {r^2}/{h^2}\}$, where $r = \|q_{i'} - q_i\|$. We analyze the behavior of the function $b(r)$, and plot it in Figure \ref{fig:Ammending_hole} (right). In view of   Remark \ref{def:def3}, the term $\exp\{- {r^2}/{h^2}\}$ vanishes for $r > 2 \sqrt 2 h_2$. Furthermore,  note that
		$b(r) \geq 0$ for $r \in [0,\frac{h_2}{\sqrt 2}]$,  
		$b(r) < 0$ for $r \in (\frac{h_2}{\sqrt 2}, 2 \sqrt 2 h_2]$,  
		and $b(r)$ reaches its extrema at $r=0$ and $r = \pm \frac{\sqrt 3 h_2}{\sqrt 2}$. Using these observations, we rewrite the definition of $b_{i'}$ as 
		\begin{equation}
		\label{b_i_def_2} b_{i'} = \sum_{0\leq r \leq \frac{h_2}{\sqrt 2}} {b(r)} - \sum_{\frac{h_2}{\sqrt 2} < r \leq 2 \sqrt 2} {|b(r)|}  \,,\end{equation}
		In what follows we bound the first term in \eqref{b_i_def_2} from above by $c_1$, and the second term from bellow, by $c_2$, and show that $c_1< c_2$.
		
		Let $N_0$ be the estimated number of points within the ball $B_0 = B(q^{(k)}_{i'}, \frac{h_2}{\sqrt 2})$. Then the first term in \eqref {b_i_def_2} can be bounded as $$\sum_{0\leq r \leq \frac{h_2}{\sqrt 2}} {b(r)} \leq N_0 \times \max_{r \in [0,\frac{h_2}{\sqrt 2}]} b(r) \leq N_0 \,.$$
		
		We now turn to bounding the second term in \eqref {b_i_def_2} from bellow. We divide the interval $(\frac{h_2}{\sqrt 2}, 2 \sqrt 2 h_2]$ into four intervals  $A_1 = (\frac{h_2}{\sqrt 2}, A]$,  $A_2 = [A, \frac{\sqrt 3 h_2}{\sqrt 2}]$, $A_3 = [\frac{\sqrt 3 h_2}{\sqrt 2}, C]$ and $A_4 = [C, 2 \sqrt 2 h_2]$, where the points $A$ and $C$ are the centers of the intervals $[\frac{h_2}{\sqrt 2}, \frac{\sqrt 3 h_2}{\sqrt 2}]$ and $[\frac{\sqrt 3 h_2}{\sqrt 2}, 2 \sqrt 2 h_2]$, respectively (i.e., $A = \frac{(1+\sqrt 3)h_2}{2\sqrt 2}$, $C = \frac{(4+\sqrt 3)h_2}{2\sqrt 2}$, see Figure \ref{fig:Ammending_hole} right). Consider four balls centered at the point $q^{(k)}_{i'}$, with the different radii $B_1 = B(q^{(k)}_{i'}, A)$, $B_2 = B(q^{(k)}_{i'}, \frac{\sqrt 3 h_2}{\sqrt 2})$, $B_3 = B(q^{(k)}_{i'}, C)$ and $B_4 = B(q^{(k)}_{i'}, 2 \sqrt 2 h_2)$, and  estimate the number of points in these balls by $N_1$, $N_2$, $N_3$ and $N_4$, respectively.

		With this notation,  
		\begin{multline}
		\label{second_term_bound}
		\sum_{\frac{h_2}{\sqrt 2} < r \leq 2 \sqrt 2} {|b(r)|} \geq \sum_{r\in A_2} {|b(r)|} + \sum_{r \in A_3} {|b(r)|} 
		\geq  N'_2 \min_{r \in A_2} {|b(r)|} +  N'_3 \min_{r \in A_3} {|b(r)|} \\ \geq N'_2\, b(A) +  N'_3\, b(C) \geq 0.3 N'_2  + 0.1 N'_3 
		\end{multline} 
		where $N'_2$  and $N'_3$ are  the estimates of the numbers of points in $B_2 \backslash B_1$ and in $B_3 \backslash B_2$, respectively,  i.e., $N_2  = N_2 - N_1$ and $N'_3 = N_3 - N_2$.
		
		In order to evaluate $N'_1$  and $N'_2$ in terms of $N_0$, we assume that the proportion of the number of points in a support does not change when the radius changes. Let $V_i$ denote  the volume of the ball  $B_i$, $i=0,1,2,3$. Then, relying on the proportion consistency assumption, we estimate $N_1$ in terms of the ratio $\frac{V_0}{V_1}= \frac{N_0}{N_1}$, where the volume of a ball of radius $\frac{h_2}{\sqrt 2}$ in $\mathbb{R}^d$ is $V_0 = \pi^{d/2} (\frac{h_2}{\sqrt 2})^d/c(d)$, and the volume of a ball of radius $R$ is $V_1=\pi^{d/2} {R}^d/\Gamma(d)$  where $\Gamma$ is  the Euler gamma function. Thus, $N_1= \big(\frac{(1+\sqrt 3)h_2}{2\sqrt 2}\big)^d N_0$. Similarly, $N_2= \sqrt 3 ^d N_0$, and  $N_3= \big(\frac{(4+\sqrt 3)h_2}{2\sqrt 2}\big)^d N_0$. Finally, after calculating  $N'_2$ and $N'_3$, and substituting the results in  \eqref{second_term_bound}, we find  that the second term is larger than $c_2 = c_3 N_0$, where $c_3 >1$. Consequently, the second term in \eqref {b_i_def_2} is dominant and $b_{i'} < 0$.
	\end{enumerate}
	
	We conclude that the weighed sum $\sum_{\substack {i \in I \\ i\neq i'}} {(q_{i'}- q_i) \widehat w_{i,i'}}$ is directed towards the hole, $b$ is negative, and so $\frac{\partial E_3}{\partial q_{i'}}$  points outside the hole. In addition, our numerical investigation showed that adopting a small step size of $0.25 \gamma_k$, where $\gamma_k$ is given by formula \ref{gamma_k}, results in a positive $\gamma_k$ during the gradient descent iterations for points in the $\epsilon$-neighborhood of the boundary of the hole.  It follow that the point  $q^{(k)}_{i'}$   in \eqref{q_i_E3} moves towards the hole and recovers missing information.
\end{proof}
\vspace{-8mm}
\begin{figure}[H]
	\centering
	\label{fig:Ammending_hole}\includegraphics[width=\textwidth,height=4.5cm,keepaspectratio]{./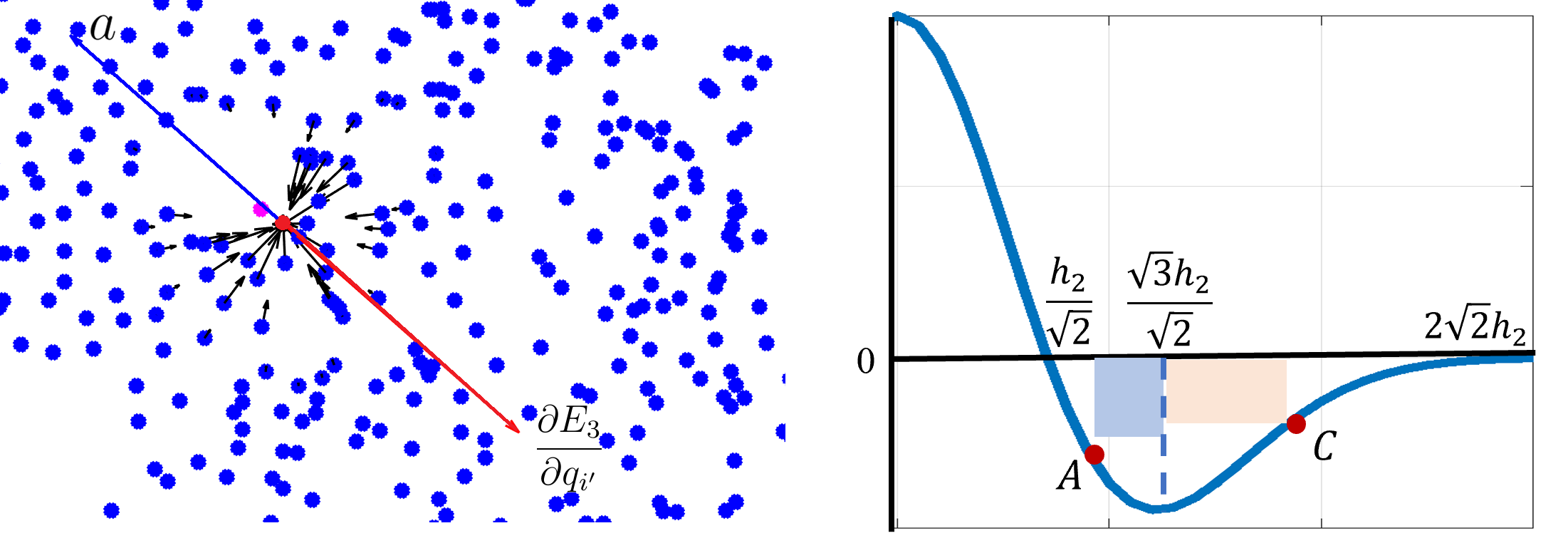}
	\caption{Illustration of the two stages in the validation of the R-MLOP method. Left: Analysis of the direction of movement of a point  $q^{(k)}_{i'}$ (in red) after an amendment step (the magenta point) on a actual case example. The forces by means of which the neighboring points ${q_i}$ act on the current point are in shown in black. The weighted sum in the direction $\vec a$ is shown in blue, while the gradient of the term $E_3$ is shown in red. Thus, the point $q^{(k)}_{i'}$ moves in a direction opposite to that of the gradient, towards the center of the hole. Right: A plot of the function $b(r)$, along with important key points which aid in bounding $b_{i'} <0$. }
	\label{fig:Ammending_hole}
\end{figure}

\subsection{Complexity of the R-MLOP Algorithm}

The R-MLOP algorithm is based on the MLOP algorithm, with the addition of the term $E_3$. As specified in Theorem \ref{complex_MLOP}, the complexity of the MLOP algorithm  is $O(nmJ + k I(nm\widehat{I}+\widehat{J}))$, where $n$ is the dimension of the data and $m$ is the dimension to which one reduces the dimension of the data for the procedure of calculating the norm ($m\ll n$). Thus, we need to evaluate the complexity of the term $E_3$. The gradient of this term involves the differences $q_{i'} - q_i$ between a given point $q_{i'}$ and points $q_i \in Q$. This calculation is performed for each iteration. Thus, the total complexity of the R-MLOP algorithm is $O(nmJ + k I(nm\widehat{I} + \widehat{I} +\widehat{J}))$.

\begin{corollary}
	Given a set of points $P=\{{p_j }\}_{j =1}^J$ sampled near a $d$-dimensional manifold $\mathcal{M} \in \mathbb{R}^n$, let $Q=\{q_i \}_{i=1}^I$ be a set of points which will provide the desired the manifold reconstruction and repairing. Then the complexity of the \textup{R-MLOP} algorithm is $O(nmJ + k I((nm+1)\widehat{I}+\widehat{J}))$, where the number of iterations $k$ is bounded as in Theorem \textup{~\ref{thm:convRate}}, $m$ is the smaller dimension to which one reduces the dimension of the data, $m \ll n$, and $\widehat{I}$ and $\widehat{J}$ are the numbers of points from the sets $Q$ and $P$, respectively, in the support of the weight function $w_{i,j}$. Therefore, the approximation is linear in the ambient dimension $n$, and does not depend on the intrinsic dimension $d$.
\end{corollary}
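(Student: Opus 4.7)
The plan is to argue incrementally from the complexity bound of Theorem \ref{complex_MLOP}, quantifying only the additional work introduced by the new repairing term $E_3$. The up-front sketching step, which is responsible for the $O(nmJ)$ preprocessing cost in MLOP, is unchanged in R-MLOP, since $E_3$ depends only on the $Q$-points and reuses the same reduced-dimension inner products and weight computations. Hence it suffices to bound the per-iteration, per-point overhead of evaluating $\partial E_3/\partial q_{i'}$ given by \eqref{partial_E_3}.

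The key structural observation is that \eqref{partial_E_3} factors as $b_{i'} \cdot \vec a_{i'}$, where $b_{i'} = \sum_{i\neq i'} \bigl(1 - 2\|q_{i'}-q_i\|^2/h_2^2\bigr)\widehat{w}_{i,i'}$ is a scalar and $\vec a_{i'} = \sum_{i\neq i'}(q_{i'}-q_i)\widehat{w}_{i,i'}$ is a vector in $\mathbb{R}^n$. The first step is to observe that the squared distances $\|q_{i'}-q_i\|^2$ and the Gaussian weights $\widehat{w}_{i,i'}$ are already formed when computing $\beta_i^{i'}$ in \eqref{eq:beta} for the MLOP gradient, so no new norm evaluations are required. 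Producing the scalar $b_{i'}$ therefore requires only $O(\widehat{I})$ additional scalar operations per point $q_{i'}$. The weighted vector sum $\vec a_{i'}$ is of exactly the same type as the $E_2$ contribution $\sum(q_{i'}-q_i)\beta_i^{i'}$ that is already accounted for in the $O(nm\widehat{I})$ term of Theorem \ref{complex_MLOP}, so forming it and then multiplying by the scalar $b_{i'}$ adds no higher-order cost.

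Summing over the $I$ query points and $k$ gradient-descent iterations, the incremental cost of the $E_3$ term amounts to $O(kI\widehat{I})$, and combining this with the MLOP bound of Theorem \ref{complex_MLOP} yields
\begin{equation*}
O(nmJ + kI(nm\widehat{I}+\widehat{J})) + O(kI\widehat{I}) \;=\; O\bigl(nmJ + kI((nm+1)\widehat{I}+\widehat{J})\bigr),
\end{equation*}
as claimed. Linearity in the ambient dimension $n$ and independence from the intrinsic dimension $d$ are inherited directly from the corresponding properties of MLOP, and the bound on $k$ follows from Theorem \ref{thm:convRate} applied to the (still $L$-smooth) cost function \eqref{eq:Grepair}.

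The only delicate point in this plan is justifying that the vector sum $\vec a_{i'}$ does not require a new full-dimensional pass: because the difference vectors $q_{i'}-q_i$ and the scalar weights $\widehat{w}_{i,i'}$ are already cached during the $E_2$ gradient evaluation, the marginal cost of the new weighted combination is dominated by the $O(nm\widehat{I})$ bookkeeping already present in Theorem \ref{complex_MLOP}. I expect this accounting for work reuse, rather than any genuinely new estimate, to be the main (mild) obstacle in formalizing the argument; everything else is a direct consequence of Theorem \ref{complex_MLOP}.
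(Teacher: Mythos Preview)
Your argument is correct and follows essentially the same approach as the paper: start from the MLOP complexity in Theorem~\ref{complex_MLOP} and account for the extra per-point, per-iteration cost of $\partial E_3/\partial q_{i'}$, which contributes an additional $O(\widehat I)$ term. The paper's own justification is considerably terser (it simply notes that the $E_3$ gradient involves the differences $q_{i'}-q_i$ over the $Q$-neighbors at each iteration), whereas you spell out the computation-reuse argument more carefully; but the underlying reasoning is the same.
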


\section{Approximating the Locations of the Holes and Their Volume}

Hole identification is an ill-posed problem, since usually the geometry of the manifold is unknown, so it is hard to know whether a hole does actually exist,  or the manifold was sampled poorly. Therefore, the best scenario for fixing the missing information is when one can manually identify the holes that need to be amended. However, in real-life scenarios, this information is not always available, and thus estimating the location and radii of holes is required.

\forceindent In this subsection, we propose a method that relies on point density considerations to identify the boundary of a hole. It is reasonable to assume that near the  boundary of a hole the density of the points drops. However, low density does not characterize only regions close to a hole, but also can stem from non-uniform sampling. In addition, for an open manifold, low-density values can also characterize the boundary of the manifold. As described in  Section \ref{repairing_intro}, the problem of hole identification was addressed with various methods, with the most prominent one being, for the point-cloud case,  the method based on $k$-nearest neighbors considerations, which is closely related to density of points. However, the challenge of dealing with manifold boundaries was not taken into account.

\forceindent In what follows we propose a procedure for identifying the boundary of a hole in a manifold, while taking into account the challenges mentioned above. Given scatter data $P=\{p_j\}_{j=1}^J$ sampled from a manifold $\mathcal{M}$ that satisfy the $h$-$\rho$ condition with respect to $\mathcal{M}$, we propose addressing the hole identification task in several phases, each dealing with another challenge caused by low density:

\begin{enumerate}[noitemsep]
	\item Quasi-uniform sampling of the manifold using the vanilla MLOP method.
	\item Identification of the boundary of the manifold.
	\item Identification of the boundary of the hole.
	\item Estimation of the location of the hole and its volume.
\end{enumerate}

First, we apply the MLOP algorithm as a pre-step in order to overcome the case of non-uniform sampling of the $P$-point data. Next, we classify the quasi-uniform point-set $Q$  produced by the MLOP algorithm to be either: a) the boundary of the manifold; b) the boundary of the hole; c) neither of them. Let $h_{0,2}$ be the fill distance of the $Q$-points (see Definition \ref{def:def1}), and let $\rho_2$ be the density of $Q$, which satisfies the inequality \eqref{def:def4}, i.e., $\#\{Q \cap \bar{B}(y,kh_{0,2})\}\leq \rho_2 k^n, \quad k \geq 1, \quad y\in Q$. Note that on the manifold boundary of the manifold the number of points does not grow at the same rate as $k$. Thus, in order to identify the boundary of the manifold one needs to analyze the change in the number of points for $kh$ for $k=1, 2$, and select the points $q_i$ such that their change in $\#\{Q \cap \bar{B}(y,kh_{0,2})\}$ is insignificant. Last, the points on the boundary of the hole are identified as the ones with a low number of points, that do not belong to the manifold boundary. Algorithm \ref{alg:Alg_hole_id} summarizes this process.
%

\begin{algorithm} [H]
	\caption{Estimating the Location of the Hole}
	\label{alg:Alg_hole_id}
	\begin{algorithmic}[1]
		\State {\bfseries Input:} $P=\{p_j\}_{j=1}^J \subset \mathbb{R}^n$
		\State {\bfseries Output:} Hole parameters $r$, $c$
		\State {Create a quasi-uniform point-set $Q$ via vanilla MLOP.}
		\State {Identify points on the manifold boundary by calculating the number of points for $h$, $2h$, and use the ratio $a_i= \frac{\#\{Q \cap \bar{B}(y,2h_{0,2})\}}{\#\{Q \cap \bar{B}(y,h_{0,2})\}}, \quad y \in Q$ to identify the boundary points $q_i$ such that $a_i < {\rm median}_i \{a_i\}$.}
		\State {Identify the points on the boundary of the hole, such that the number of points at this points is low (and they does not belong to the manifold boundary.}
		\State {Clean outliers in the set of boundary points.}
		\State {Approximate the location of the hole and its volume. Estimate the radius $r$ of the hole as half of the maximum of the distance between the boundary points, and the center $c$ of the hole as the center of mass of the boundary points.}
	\end{algorithmic}
\end{algorithm}


\section{Numerical Examples}

%
%


In this subsection, we demonstrate the efficiency of our method on 3D surfaces as well as on manifolds in higher dimension, for single and multiple hole repairing. The numerical setup was usually built by sampling known manifold data, and artificially creating holes in it. It should be noted that in all of the examples below we relied on the fact that the location of the hole and its radius were known.

\subsubsection*{Data Repairing in Low-Dimensional Space}

We start by demonstrating our data completion method on 3D scenes of the bunny and the dragon taken from the Stanford Scanning Repository \cite{levoy2005stanford}. We loaded the two published models, and randomly sampled 1000 points which served as the initial $P$ points. Next, a hole was artificially created by removing points from a chosen location (the hole in the bunny was in its neck, while the hole in the dragon was in its head), which resulted in about 950 points. Later, a subset of 350 points was sampled to construct the $Q$ set. In addition, we slightly increased the density of the $Q$ points near the boundary of the hole. An illustration of the initial settings for the repairing algorithm is provided in Figure \ref{fig:repair_bunny} (A), Figure \ref{fig:repair_dragon_head_hole} (A). 

\forceindent In our numerical experiment, we compared the result of applying the vanilla MLOP with the result of R-MLOP. First, we applied the plain MLOP algorithm on our data, which resulted in a quasi-uniform sampling (Figure \ref{fig:repair_bunny} (B)). As expected, the reconstruction maintained its proximity to the $P$ points and did not recover the missing information. Next, using the information on the hole, we calculated the proximity coefficient $T$ in \eqref{eq:mu2} (its values are presented in Figure \ref{fig:repair_dragon_head_hole} (B)). Finally, we executed the repairing algorithm described above. The amended result after 30 iterations can be seen in Figure \ref{fig:repair_bunny} (C), Figure \ref{fig:repair_dragon_head_hole}, (C). One can see that the existing holes were repaired successfully with uniform sampling.
\vspace{-12mm}

\begin{figure}[H]
	\centering
	\captionsetup[subfloat]{farskip=0pt,captionskip=0pt, aboveskip=0pt}
	\subfloat[][]{ \includegraphics[width=0.34\textwidth]{./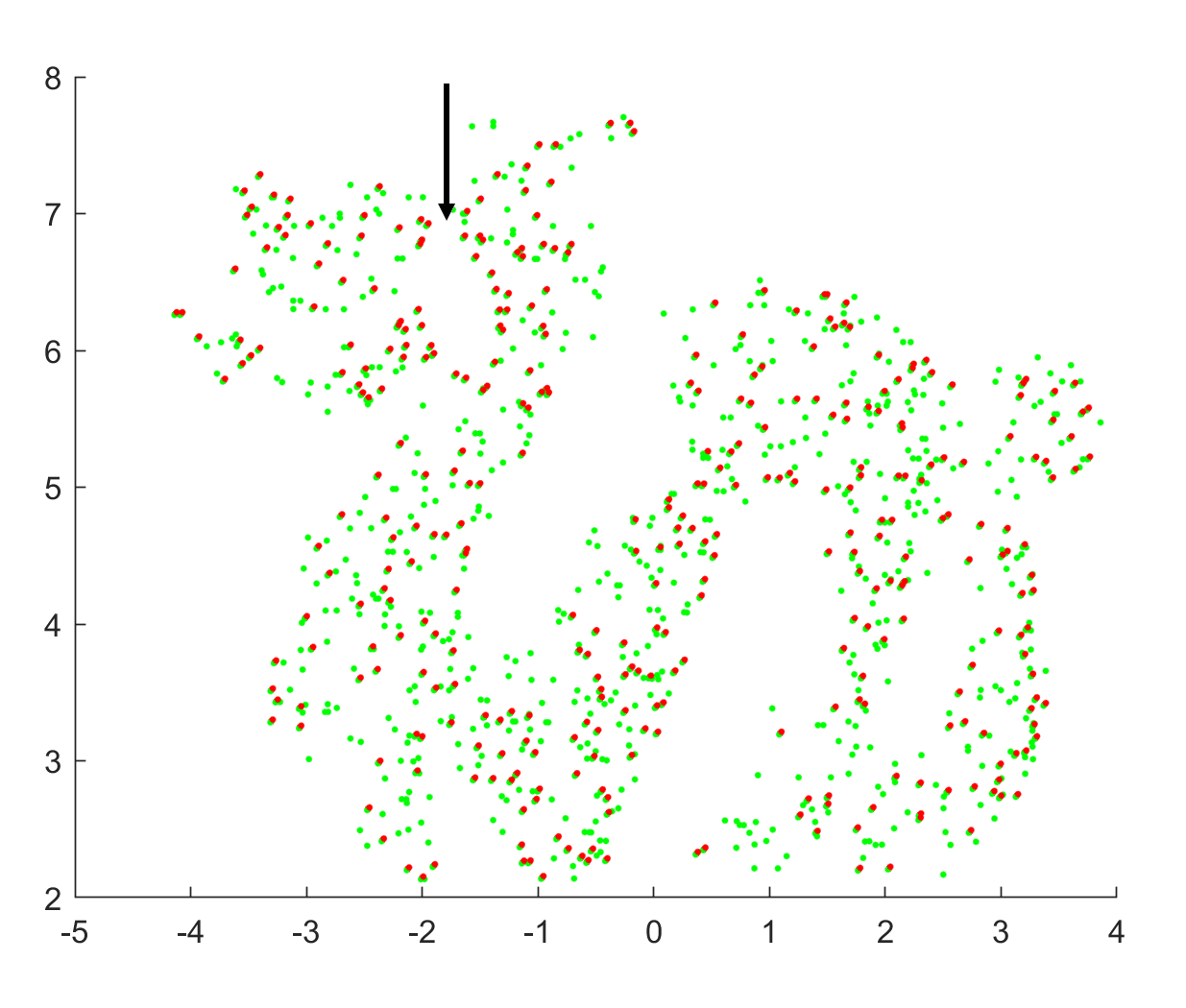} } \hspace{-2em}
	\subfloat[][]{ \includegraphics[width=0.34\textwidth]{./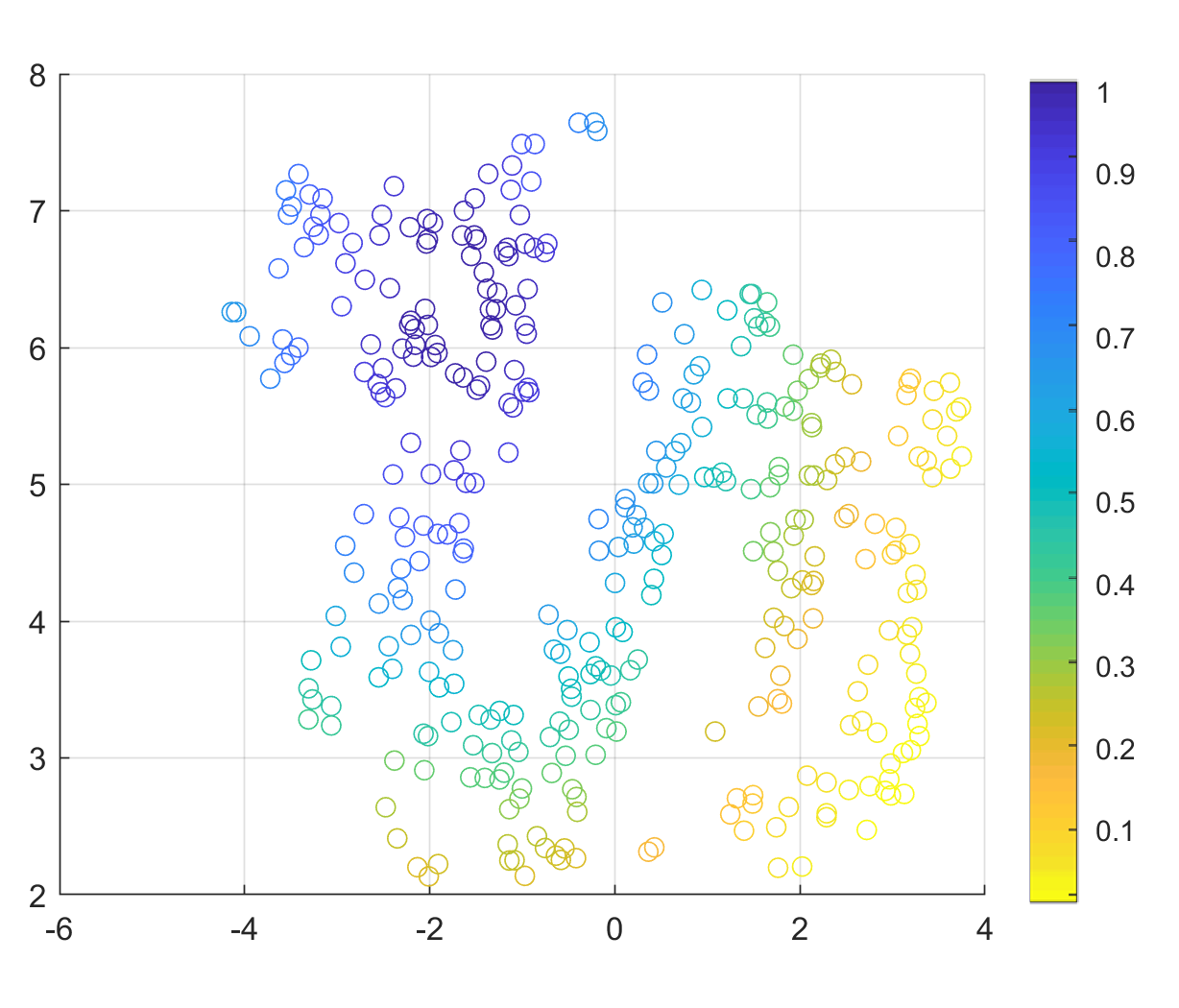} } \hspace{-1.5em}
	\subfloat[][]{ \includegraphics[width=0.34\textwidth]{./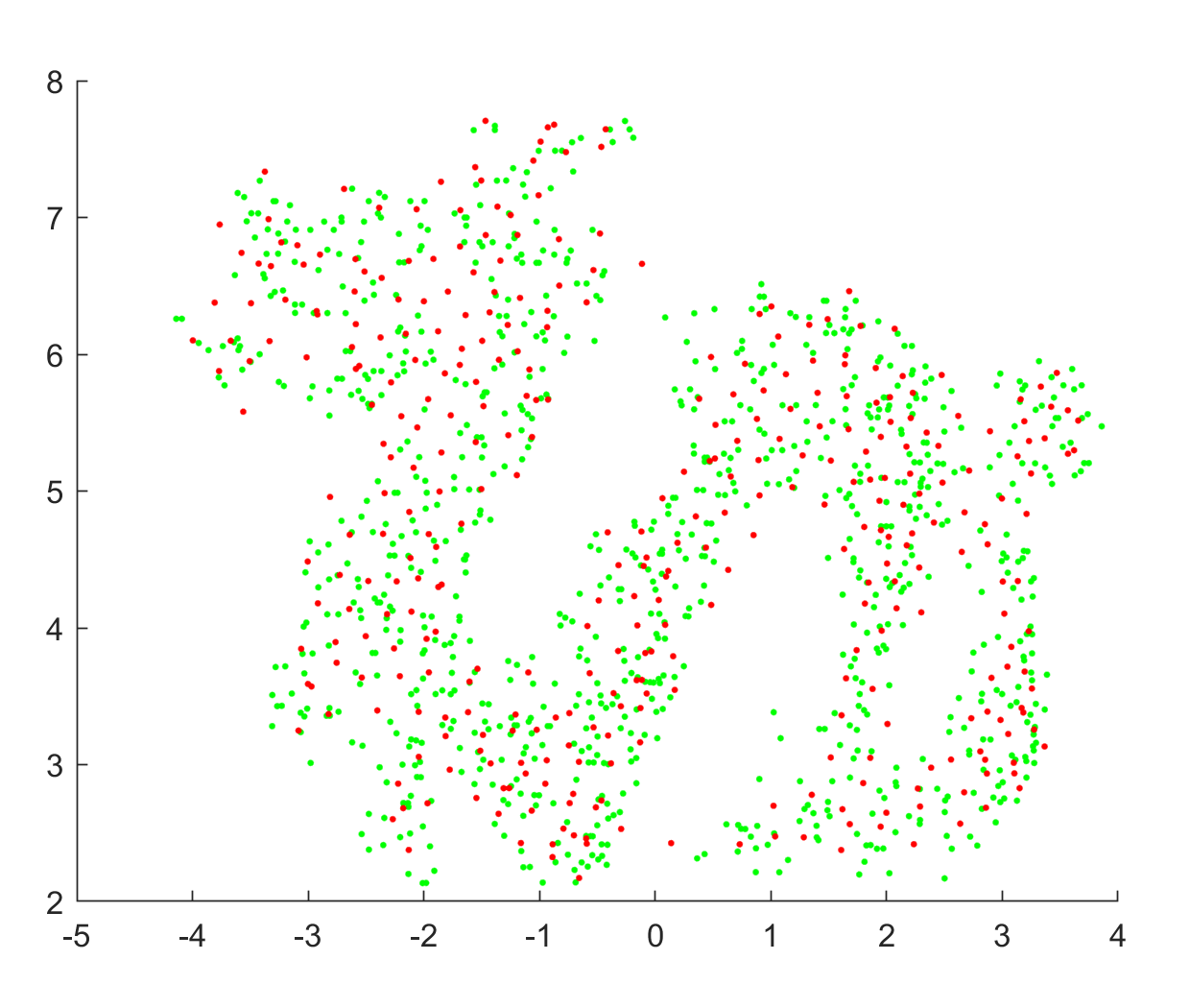} } 
	\caption{Amending the scattered data for the Stanford dragon. (A) The initial scattered data sampled from the dragon model, with a hole artificially created in its head (the initial $P$-points in green, the initial $Q$-points in red). (B) The weights $T$ of the hole, which are closer to $1$ near the hole location. (C) The result produced by the R-MLOP algorithm.}
	\label{fig:repair_dragon_head_hole}
\end{figure}

\subsubsection*{Multiple Holes Repair}

In the next example, we applied the multiple holes repair methodology to the Stanford dragon, which was sampled with $P$ and $Q$ as described above. Two holes locations were selected, one in the dragon head and the other in its tail, and points around them were removed (see Figure \ref{fig:repair_dragon_two_holes_v2}, (A)). Subsequently, we calculated the enhanced proximity weights $T$; they are presented in Figure \ref{fig:repair_dragon_two_holes_v2}, (B). The result produced by the surface amending algorithm is presented in Figure \ref{fig:repair_dragon_two_holes_v2}, (C). One can see that the missing information on the two holes was successfully recovered with quasi-uniform sampling.

\begin{figure}[H]
	\centering
	\captionsetup[subfloat]{farskip=0pt,captionskip=0pt, aboveskip=0pt}
	\subfloat[][]{ \includegraphics[width=0.34\textwidth]{./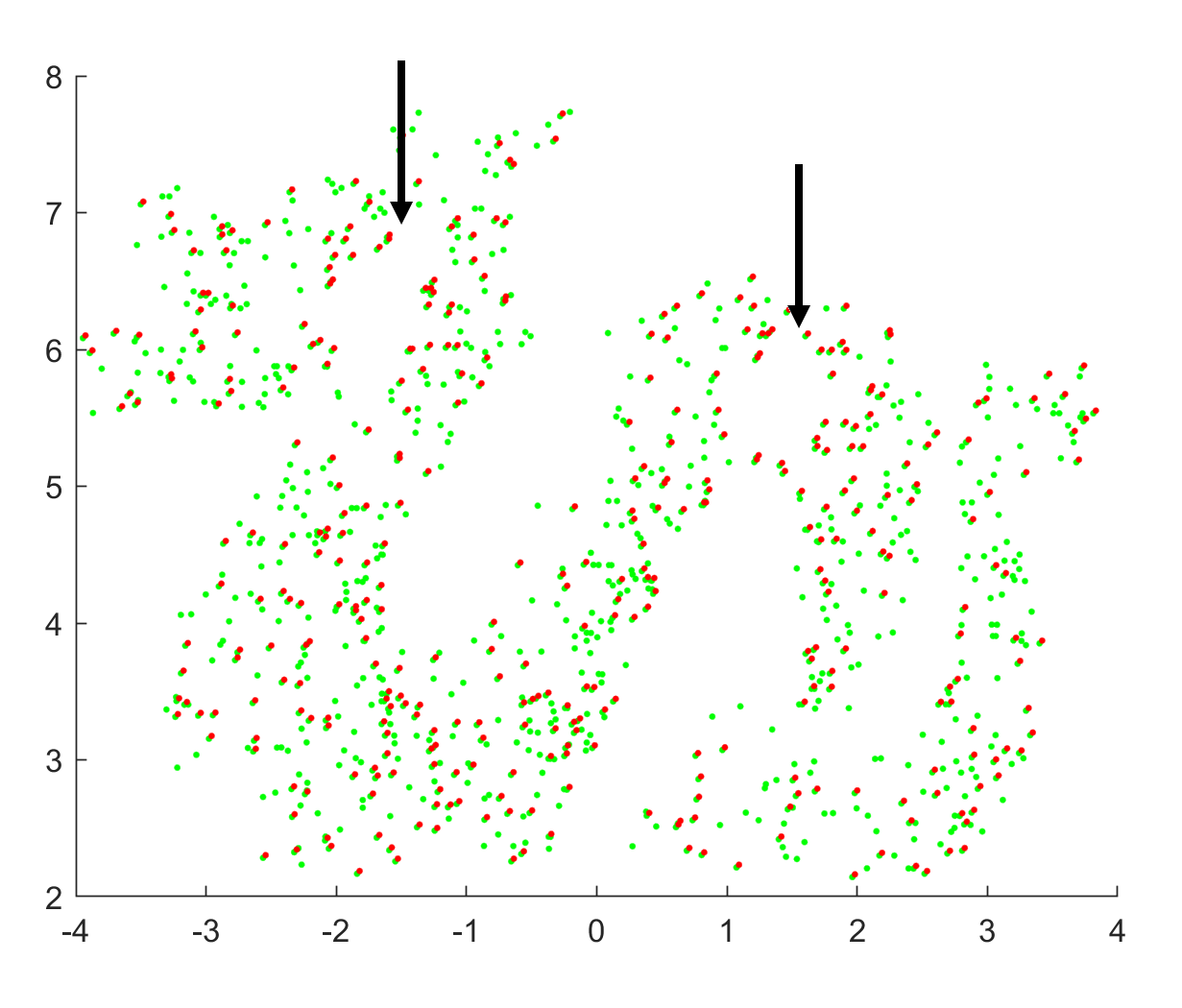} } \hspace{-2em}
	\subfloat[][]{ \includegraphics[width=0.34\textwidth]{./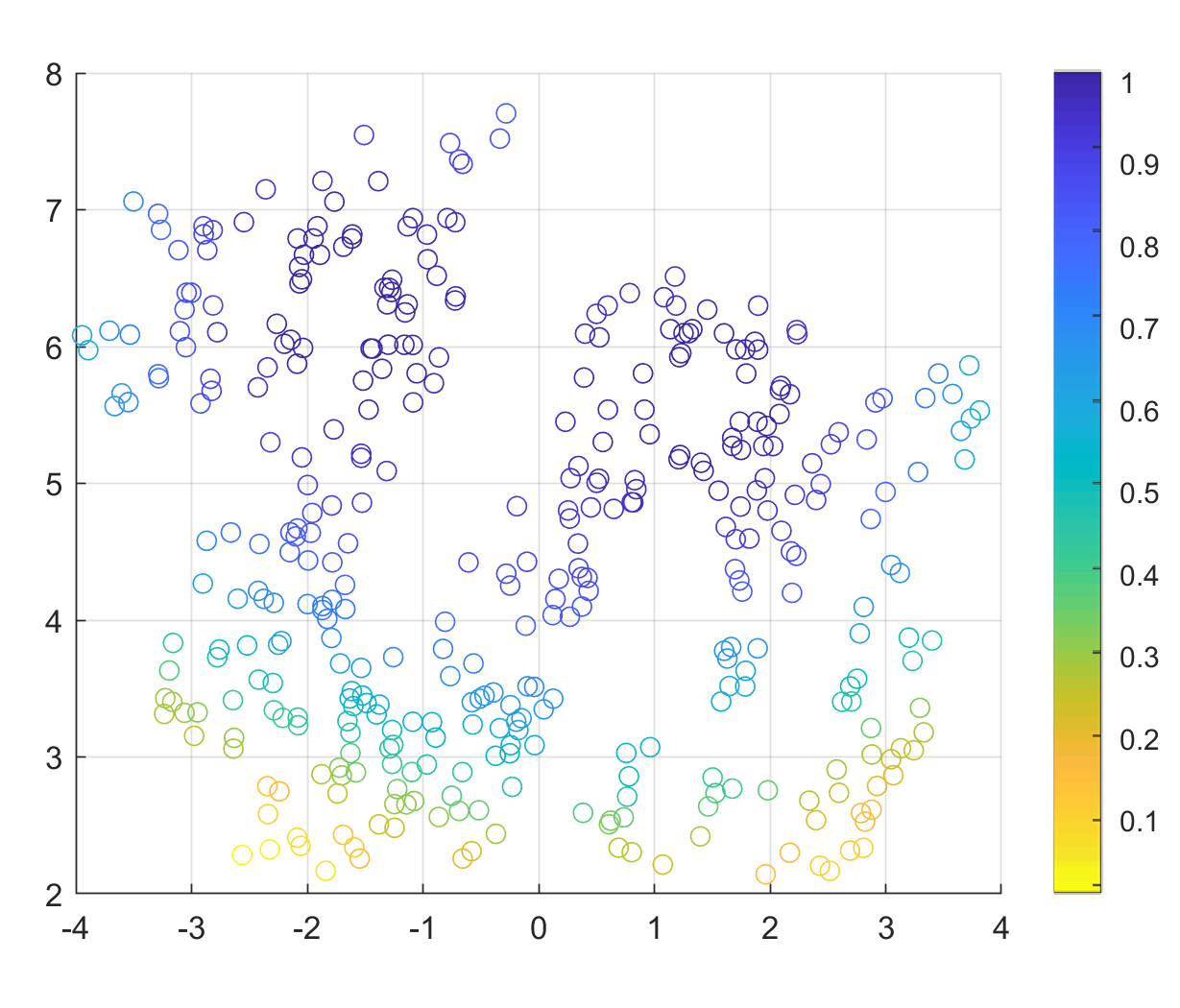} } \hspace{-1.5em}
	\subfloat[][]{ \includegraphics[width=0.34\textwidth]{./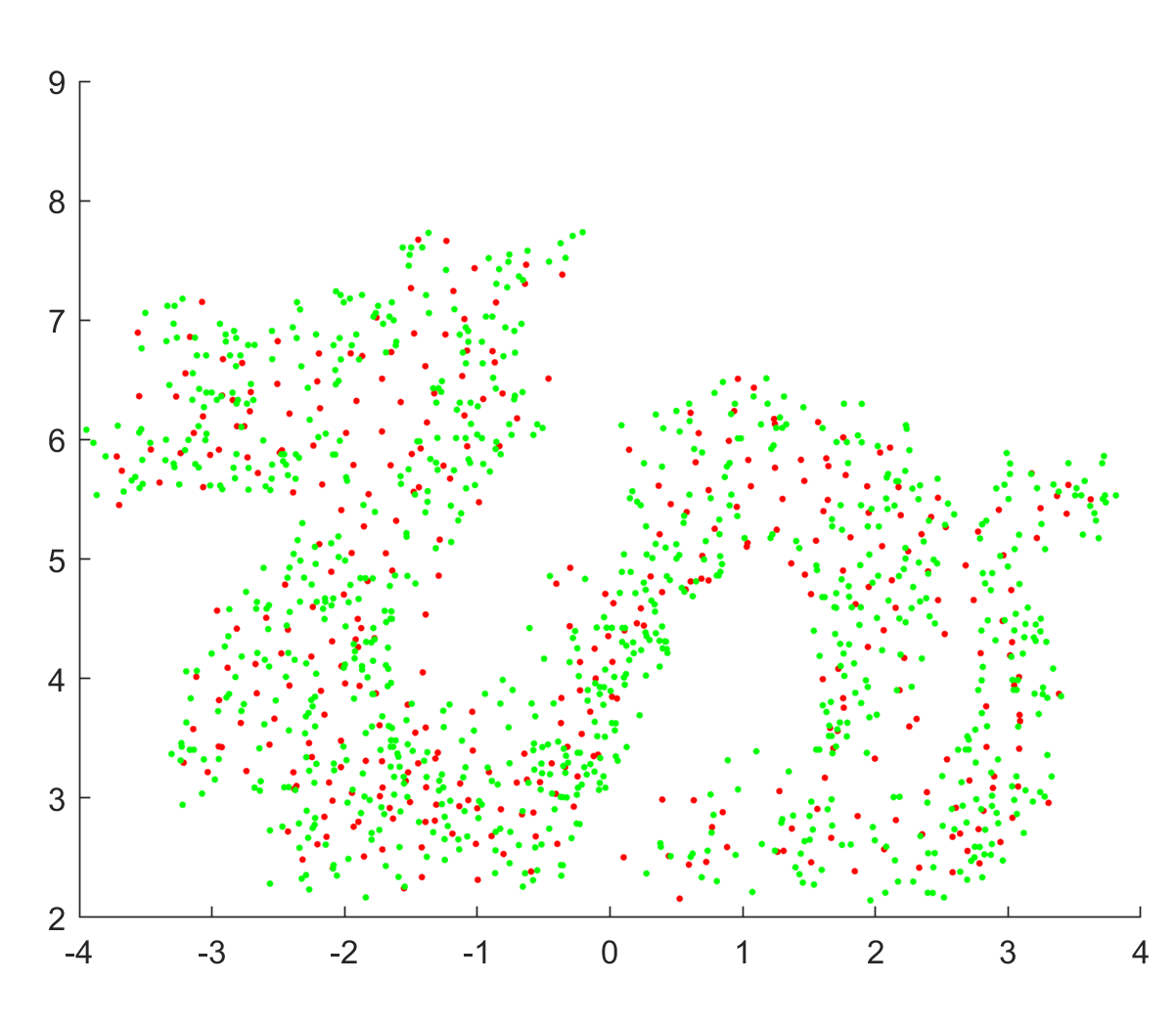} } 
	\caption{Amending the scattered data for the Stanford dragon. (A) The initial scattered data sampled from the dragon model, with two holes artificially created, one in its head and another in its tail marked with arrows (the initial $P$-points in green, the initial $Q$-points in red). (B) The weights of the hole $T$ which are closer to $1$ near the hole location. (C) The result produced by the R-MLOP algorithm.}
	\label{fig:repair_dragon_two_holes_v2}
\end{figure}

\subsubsection*{Manifold Repairing in High-Dimensional Space}
In this subsection, we demonstrate the data completion method on several examples of a low-dimensional manifold embedded in a high-dimensional space. Specifically, we embedded a two-dimensional and a six-dimensional cylindrical structure in $\R^{60}$, as well as a cone structure with multiple holes, also embedded in $\R^{60}$. In all of cases we artificially created a hole in the data around a certain point (Figure \ref{fig:repair_cylinder}, Figure \ref{fig:repair_7Dcylinder} and Figure \ref{fig:repair_cone_two_holes_v1}, (A)). We then applied the R-MLOP method for data completion. The details of the data creation are described below.

\label{sec:Cylindr_example}
\forceindent First, we embedded a two-dimensional cylindrical structure in a 60-dimensional linear space. We sampled the structure using the parametrization
\begin{eqnarray*}p=t v_1+\frac{R}{\sqrt 2}(\cos(u)v_2+  \sin(u)v_3)\,,\end{eqnarray*}
where $v_1=[1,1,1,1,1, \dotsc ,1]$, $v_2=[0,1,-1,0,0, \dotsc ,0], v_3=[1,0,0,-1,0, \dotsc ,0]$  
$(v_1,v_2,v_3 \in \R^{60})$, $t \in [0,2]$ and $u \in [0.1 \pi,1.5 \pi]$. Using this representation, $790$ uniformly distributed (in parameter space) points were sampled with uniformly distributed noise (i.e., $U(-0.1, 0.1)$). The initial $Q$ set was constructed by randomly sampling $230$ points (Figure \ref{fig:repair_cylinder} (A)). Next, we applied the plain MLOP algorithm on our data, which resulted in a quasi-uniform sampling (Figure \ref{fig:repair_cylinder} (B))). As expected, the reconstruction maintained its proximity to the $P$ points, and did not recover the missing information. Finally, we executed the R-MLOP algorithm, and after 70 iterations the manifold was amended with quasi-uniformly sampling (Figure \ref{fig:repair_cylinder} (C)). 

\vspace{-10mm}
\begin{figure}[H]
	\centering
	\captionsetup[subfloat]{farskip=0pt,captionskip=0pt, aboveskip=0pt}
	\subfloat[][]{ \includegraphics[width=0.34\textwidth]{./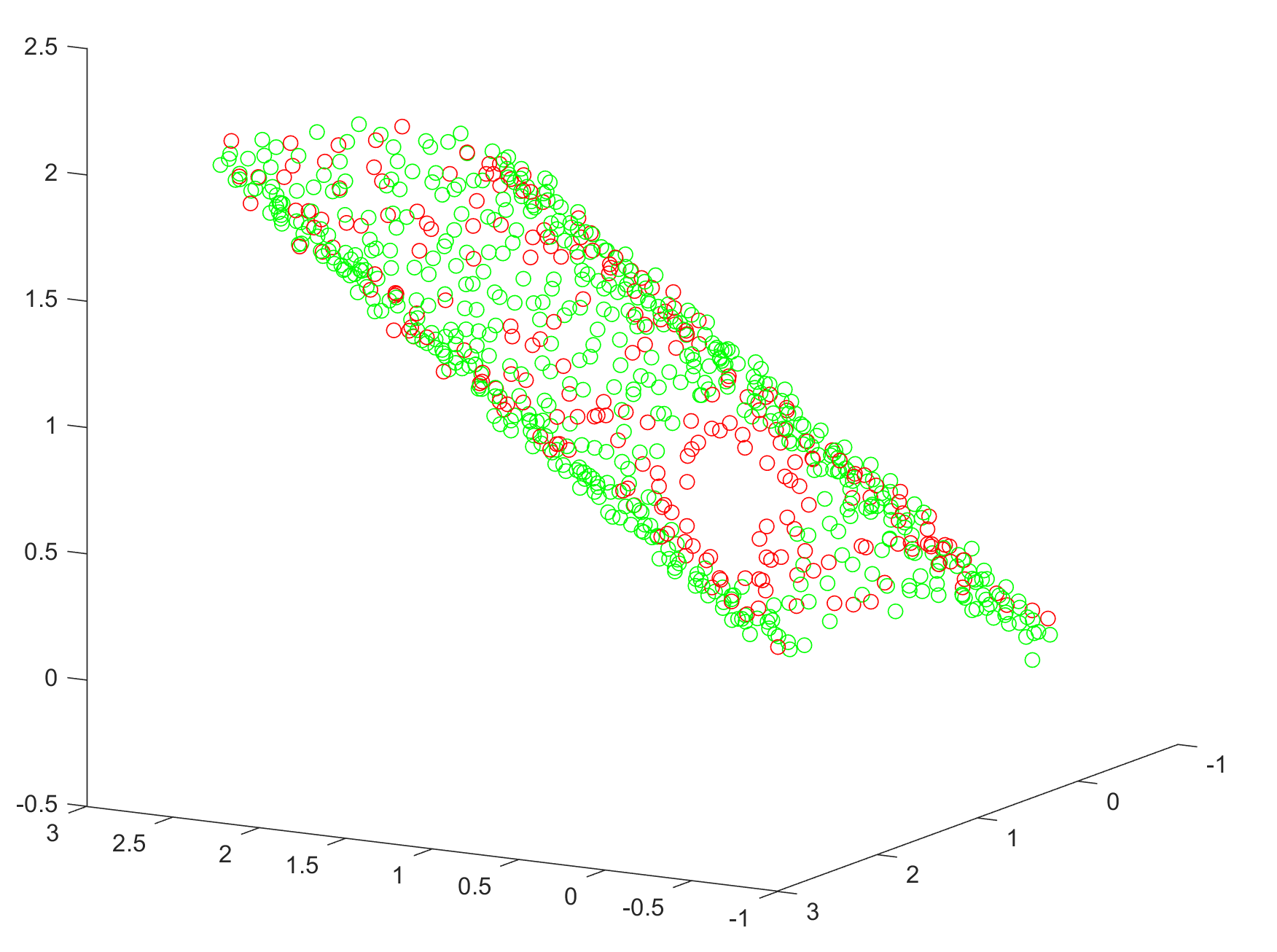} } \hspace{-2em}
	\subfloat[][]{ \includegraphics[width=0.34\textwidth]{./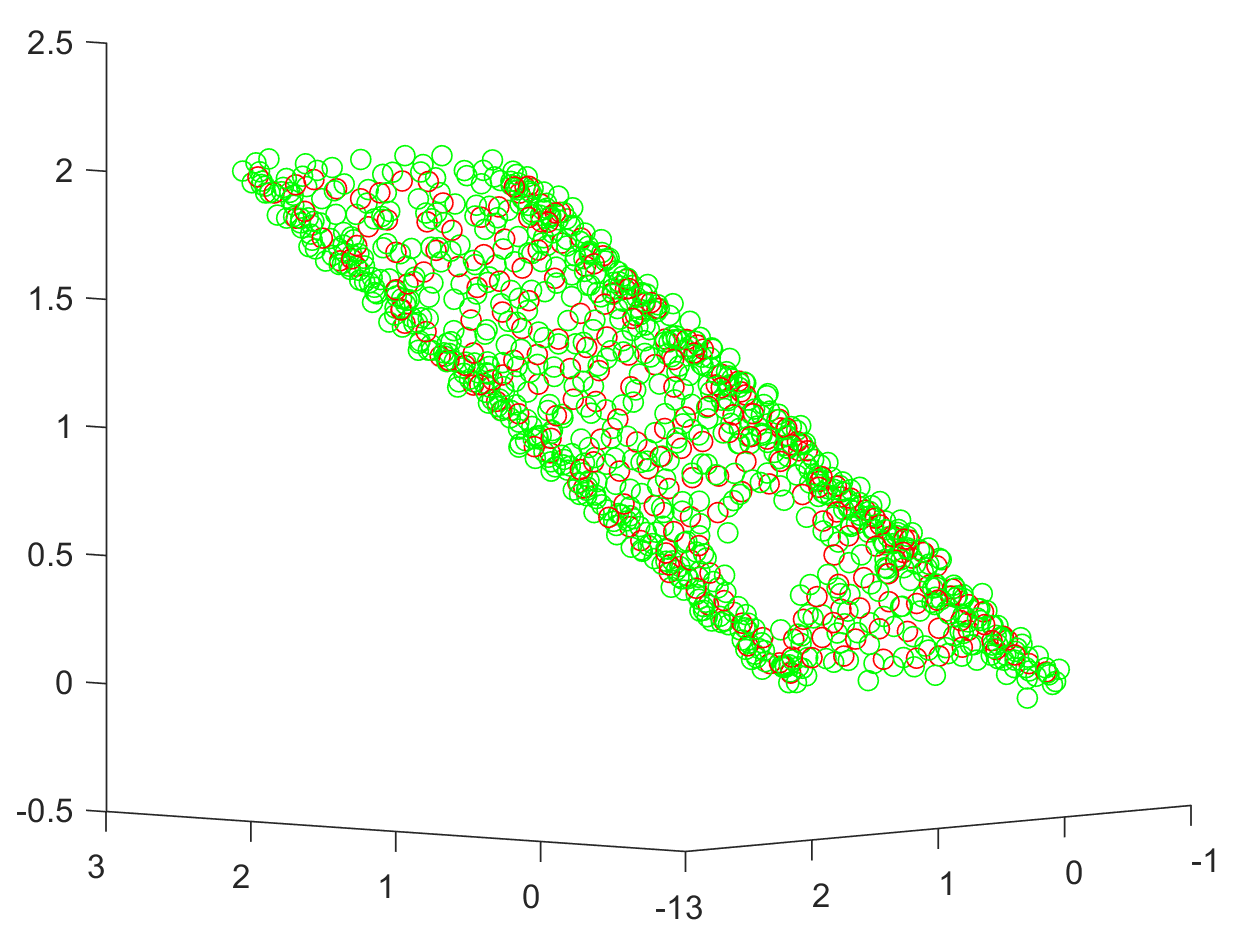} } \hspace{-2em}
	\subfloat[][]{ \includegraphics[width=0.34\textwidth]{./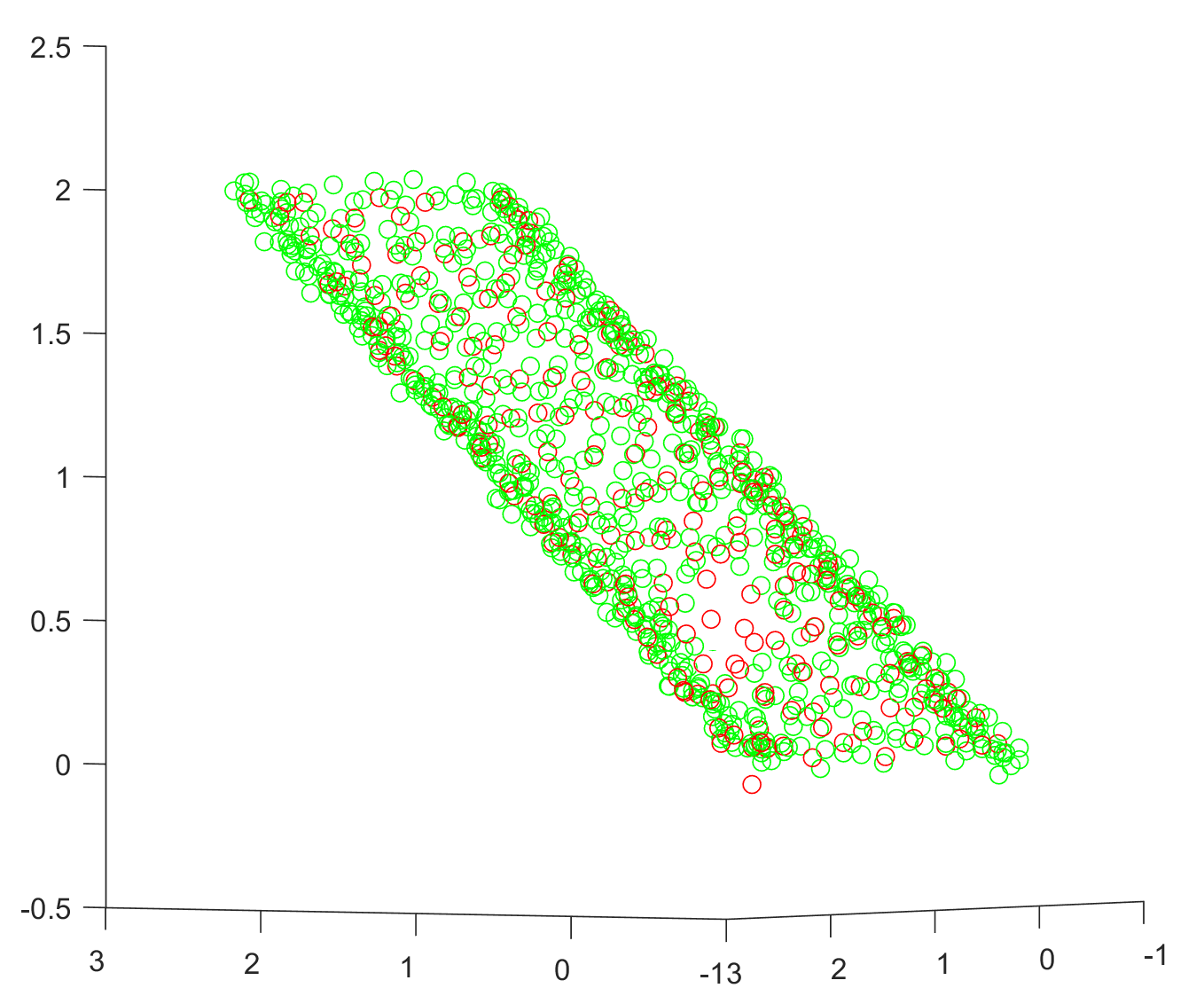} } 
	\caption{Cylindrical structure embedded into a 60-dimensional space. The first three coordinates of the point-set are shown. (A) Scattered data with uniformly distributed noise $U(-0.1; 0.1)$ (green), and the initial point-set $Q^{(0)}$ (red), with an artificial hole created. (B) The point-set generated by the MLOP algorithm. (C) The result produced by the R-MLOP algorithm.}
	\label{fig:repair_cylinder}
\end{figure}

\subsection*{Six-dimensional cylindrical structure}
\label{sec:10dimCylinder_example_1}

Next, we tested our method on higher-dimensional manifolds  by utilizing an $n$-sphere to generate an $(n+1)$-dimensional cylinder (in the example of the two-dimensional cylinder, we used a circle  to generate the structure). Specifically, we utilized a five-dimensional sphere to build a six-dimensional manifold, using the parameterization
\begin{align*}
x_1 = R \cos(u_1) \,,\quad
x_2 = R \sin(u_1)\cos(u_2),\quad 
\ldots,\quad 
x_{6} = R \sin(u_1)\sin(u_2)\cdots\sin(u_5)\sin(u_6)\,. 
\end{align*}
We then embedded the sampled data in a 60-dimensional space by the parametrization
\begin{eqnarray}
p=t v_0 + R^2 [x_1,x_2, x_3, x_4, x_5, x_6, 0, \dotsc ,0]\,,
\end{eqnarray}
where $R = 1.5$, $t \in [0,2]$, $u_i \in [0.1 \pi, 0.6 \pi]$, and $v_0 \in \R^{60}$ is a vector with 1's in positions $1,...,d+1$ and 0 in the remaining positions. We randomly sampled the $P$-points from the six-dimensional cylindrical structure and artificially created a hole in a known location and of known size. We embedded the sampled data in  a 60-dimensional space. This process resulted in $1180$ $P$-points. Next, uniformly distributed noise $U(-0.2; 0.2)$ was added, and then a subset of 460 points was sampled to construct the $Q$ set. To avoid trying to visualize a six-dimensional manifold, we plot here the cross-section of the cylindrical structure in three dimensions.

\forceindent Our numerical experiment included several executions. First, we applied the plain MLOP algorithm on our data, which resulted in a quasi-uniform sampling (Figure \ref{fig:repair_7Dcylinder} (B)). As expected, the reconstruction maintained its proximity to the $P$ points, and did not recover the missing information. Next, using the hole information we calculated the proximity coefficient $T$ in \eqref{eq:mu2} (the resulting values are presented in Figure \ref{fig:repair_7Dcylinder} (C)). Finally, we executed the repairing algorithm described above. The amending result after 100 iterations is shown in Figure \ref{fig:repair_7Dcylinder} (D). As one can see, the holes were repaired successfully with the high-dimensional cylindrical structure with uniform sampling.

\vspace{-10mm}
\begin{figure}[H]
	\centering
	\captionsetup[subfloat]{farskip=0pt,captionskip=0pt, aboveskip=0pt}
	\subfloat[][]{ \includegraphics[width=0.5\textwidth]{./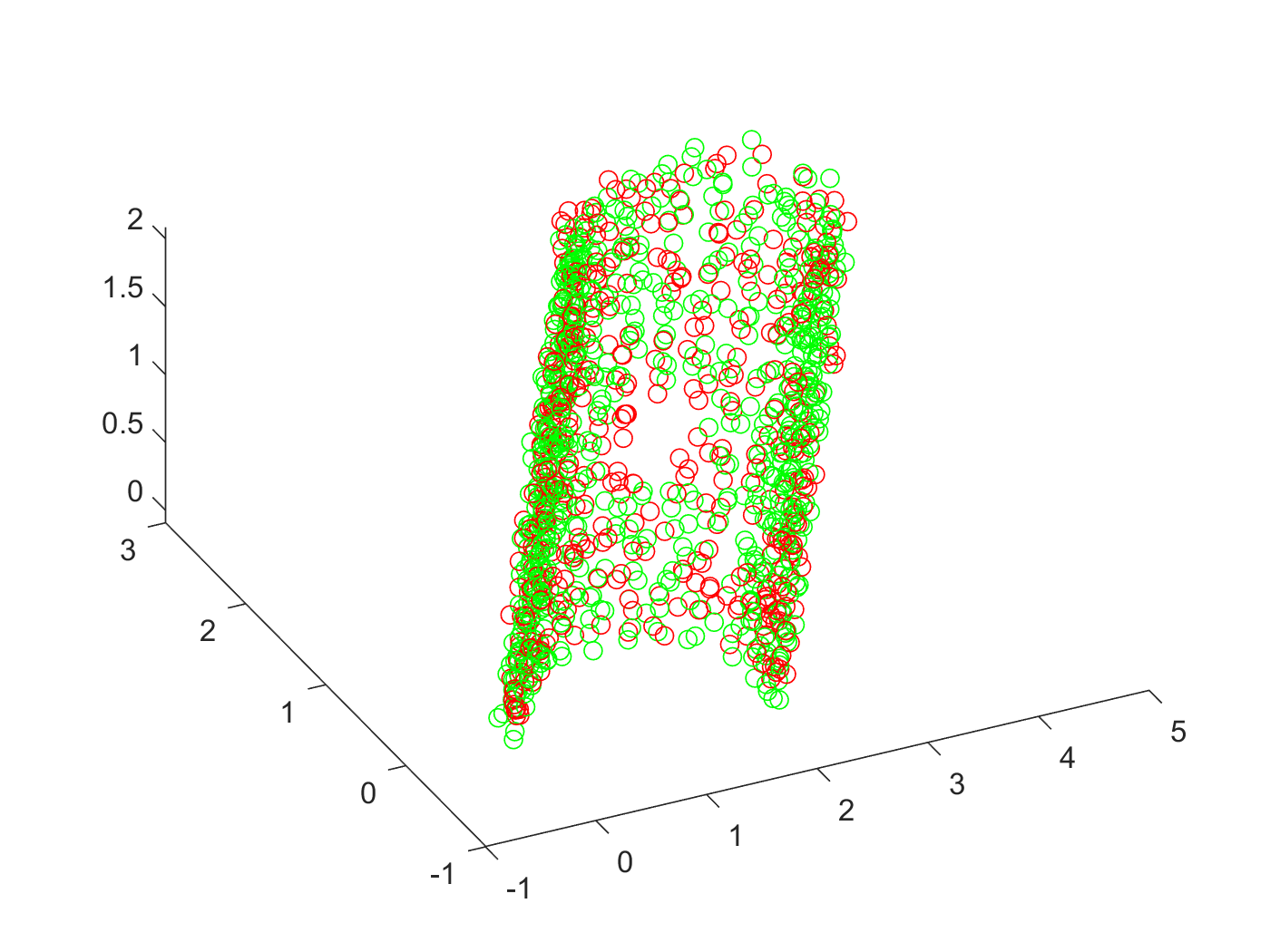} } \hspace{-2em}
	\subfloat[][]{ \includegraphics[width=0.5\textwidth]{./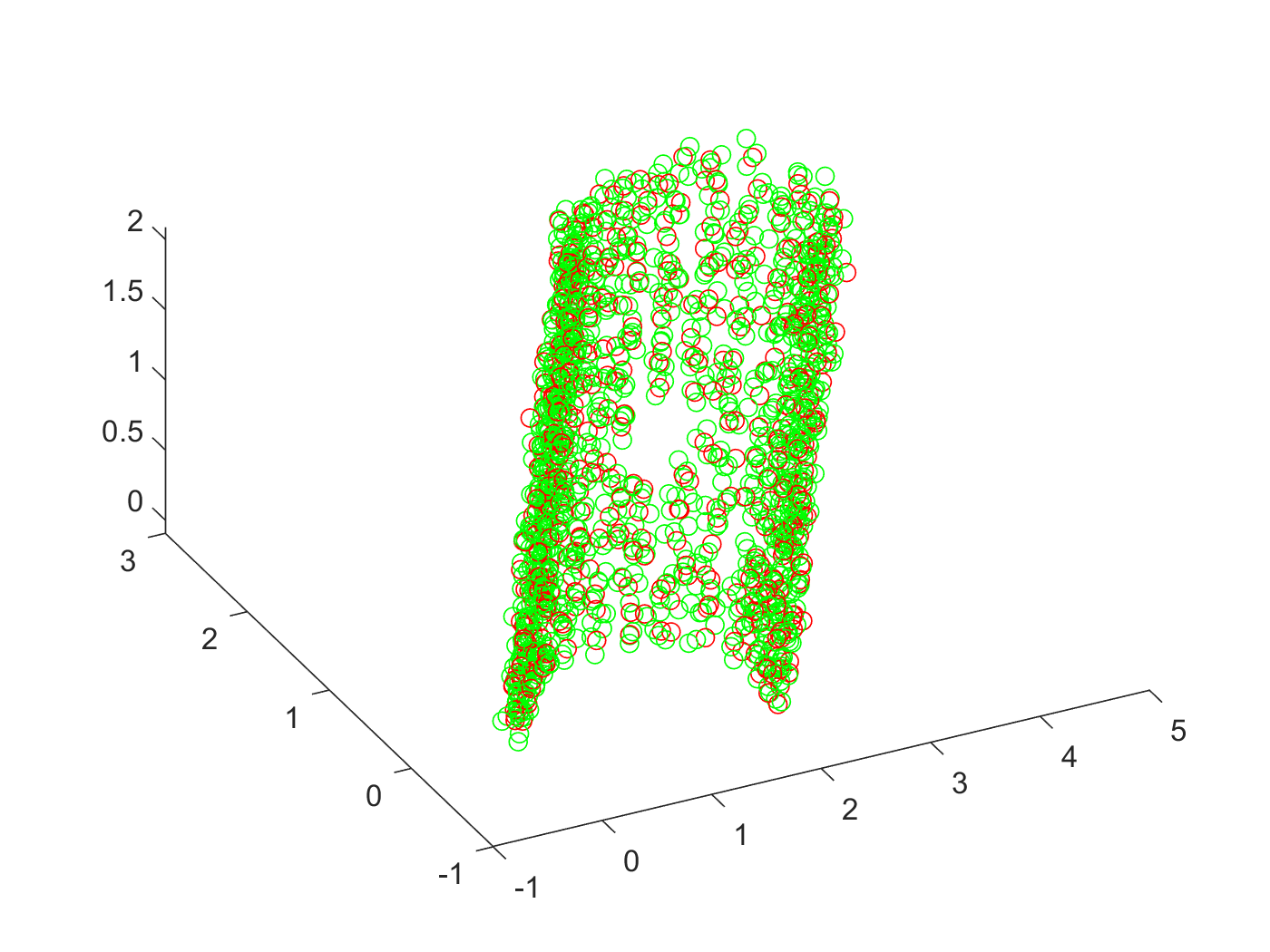} } \\[-0.7ex]
	\subfloat[][]{ \includegraphics[width=0.5\textwidth]{./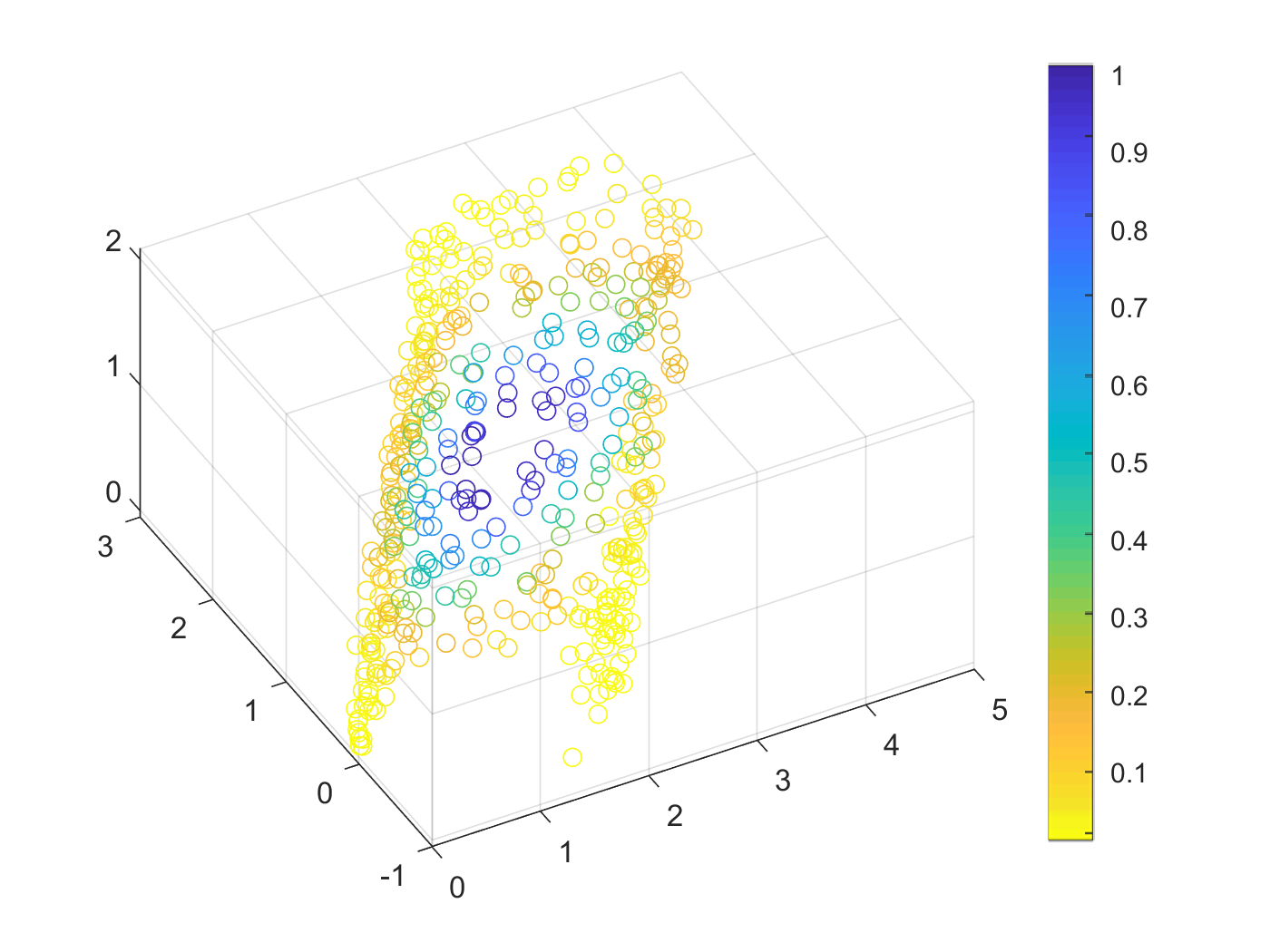} } \hspace{-2em}
	\subfloat[][]{\includegraphics[width=0.5\textwidth]{./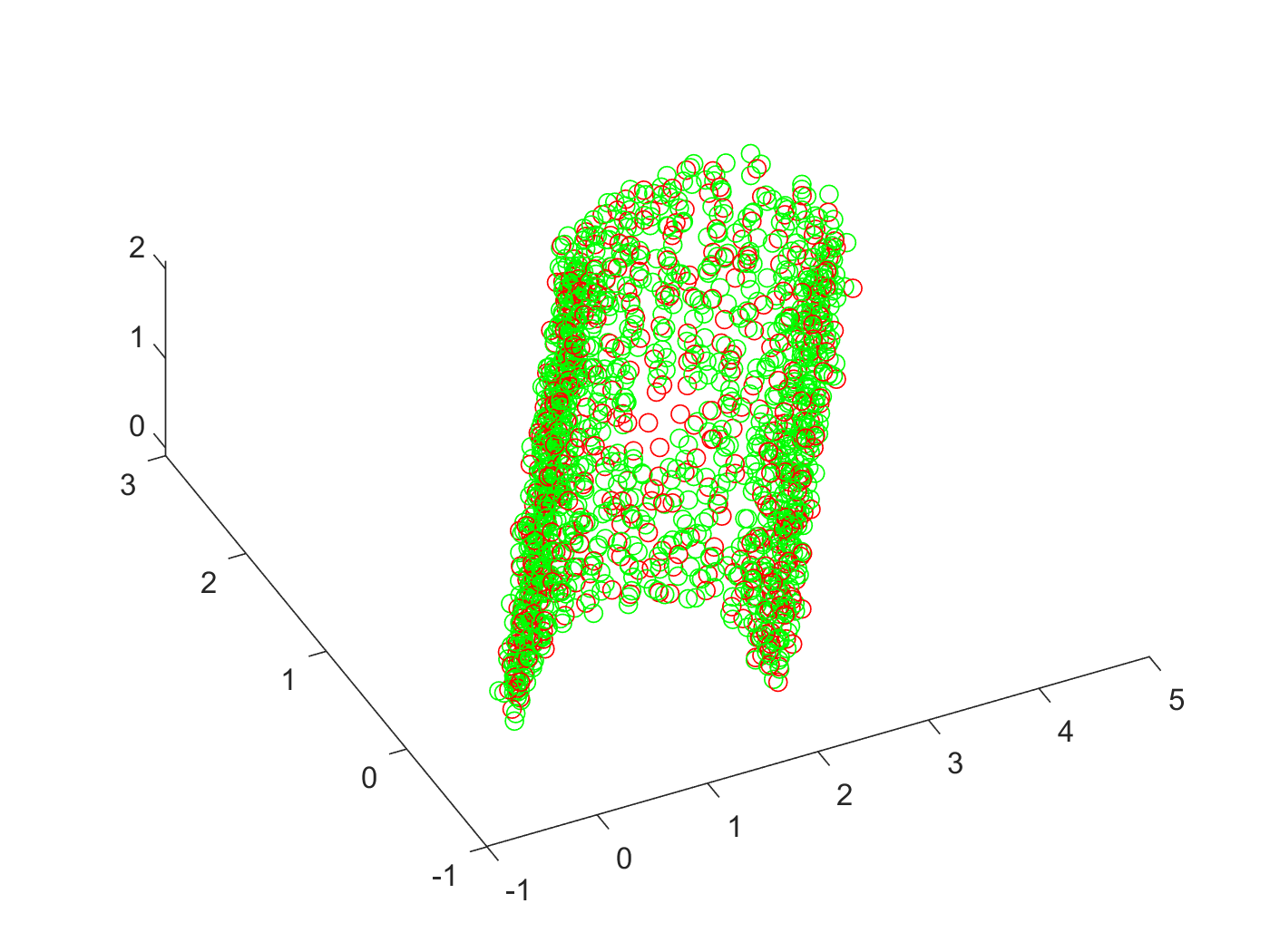}}
	\caption{Amending the scattered data from a six-dimensional cylindrical structure embedded in a 60-dimensional space. The cross-section of the cylindrical structure by a hyperplane in which the first four coordinates are greater then $-0.5$ is shown. (A) The initial scattered data of size $1180$ sampled from the manifold (the initial $P$-points in green, the initial $Q$-points in red). (B) The result after applying the plain MLOP, which produced a quasi-uniform sampling of the six-dimensional cylindrical structure: the hole is not amended. (C) The weights of the hole $T$ which are closer to $1$ near the hole location. (D) The result produced by the R-MLOP algorithm.}
	\label{fig:repair_7Dcylinder}
\end{figure}

\subsubsection*{Multiple Holes Repair}

Last, we demonstrate the ability of the R-MLOP algorithm to cope with a geometric structure of different dimensions at different locations and with multiple holes. Here we combined a 3-dimensional manifold, namely, a cone structure, with a one-dimensional manifold, namely, a line segment. This object was embedded in a 60-dimensional linear space. We used the cone parameterization
\begin{eqnarray*} p=t v_1+\frac {e^{-{R^2}}}{\sqrt 2}(\cos(u)v_2+ \sin(u)v_3)\,, \end{eqnarray*}  
where $v_1=[1,1,1,1,0, \dotsc, 0], v_2=[0, 1, -1, 0, 0,\dotsc,0], v_3=[1, 0, 0, -1, 0,\dotsc,0]$, $ (v_1,v_2,v_3) \in \R^{60}$, $t\in [0,2]$, $R\in [0,2.5]$, and $u \in [0.1\pi,1.5\pi]$. We sampled $850$ points from the structure with added uniformly distributed noise of magnitude $0.2$. The initial set $Q^{(0)}$ of size $140$ was randomly sampled (Figure \ref{fig:repair_cone_two_holes_v1}  (A)). Next, we applied the plain MLOP algorithm on our data, which produced a quasi-uniform sampling (Figure \ref{fig:repair_cone_two_holes_v1} (B))). As expected, the reconstruction maintained its proximity to the $P$-points, and did not recover the missing information. Subsequently, we calculated the coefficient $T$ using equation \eqref{eq:mu2} (its values range from $0$ to $1$ and correspond to the yellow and blue color in Figure \ref{fig:repair_cone_two_holes_v1} (C)). Finally, we executed the R-MLOP algorithm, and after 200 iterations the manifold was amended with a quasi-uniform sampling (Figure \ref{fig:repair_cone_two_holes_v1} (D)).

\vspace{-12mm}
\begin{figure}[H]
	\centering
	\captionsetup[subfloat]{farskip=0pt,captionskip=0pt, aboveskip=0pt}
	\subfloat[][]{ \includegraphics[width=0.5\textwidth]{./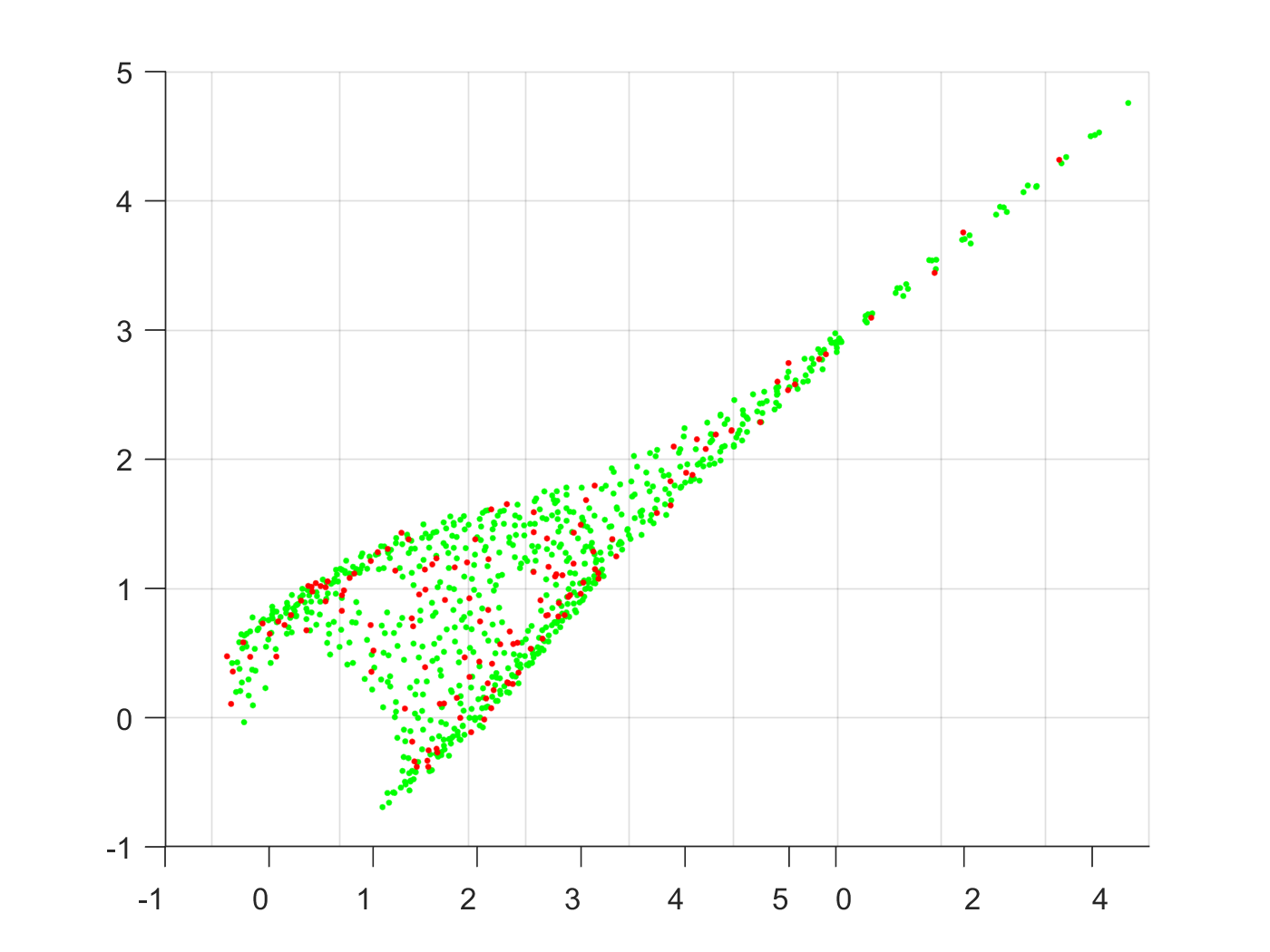} } \hspace{-2em}
	\subfloat[][]{ \includegraphics[width=0.5\textwidth]{./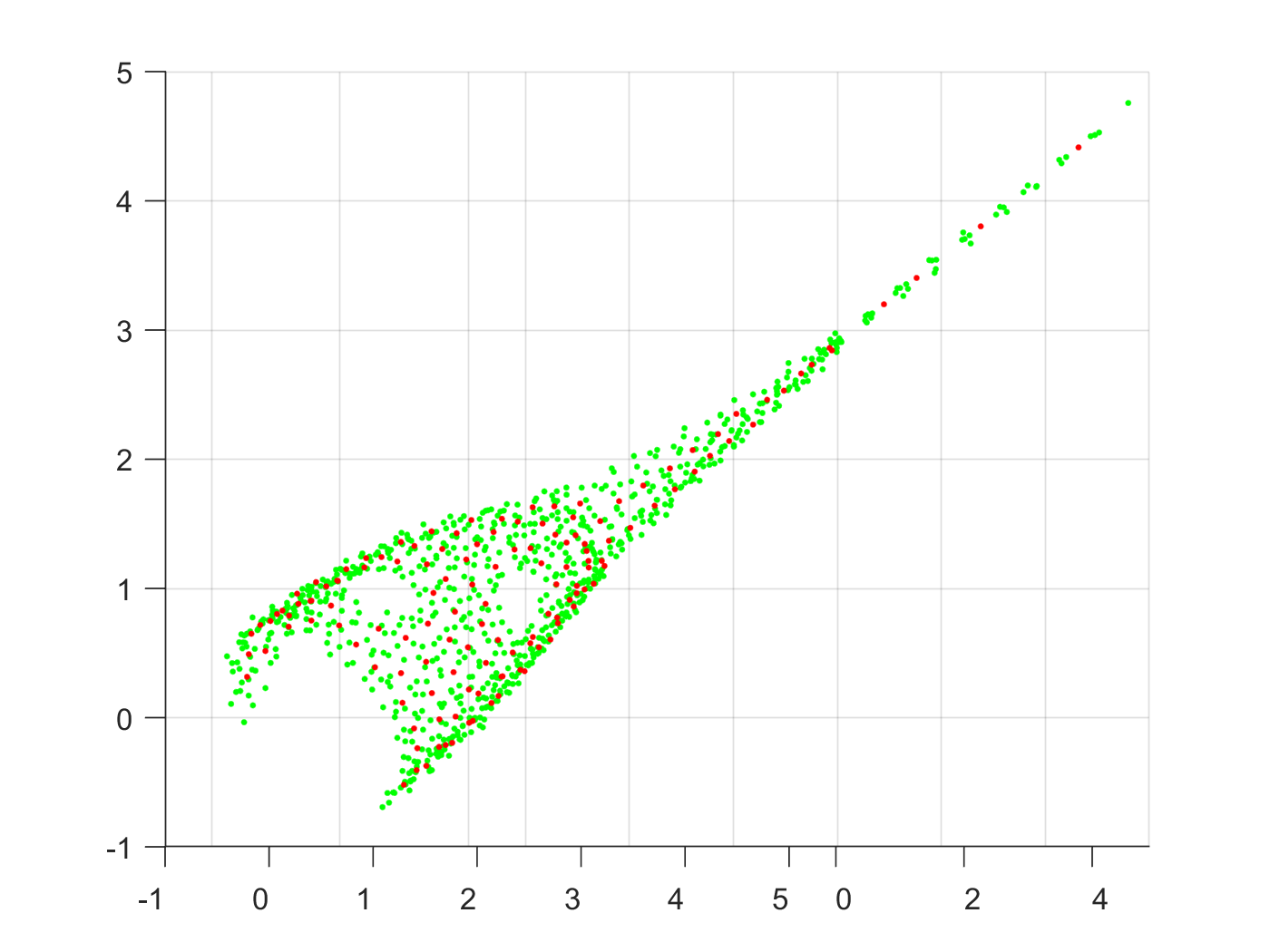} } \\[-0.7ex]
	\subfloat[][]{ \includegraphics[width=0.5\textwidth]{./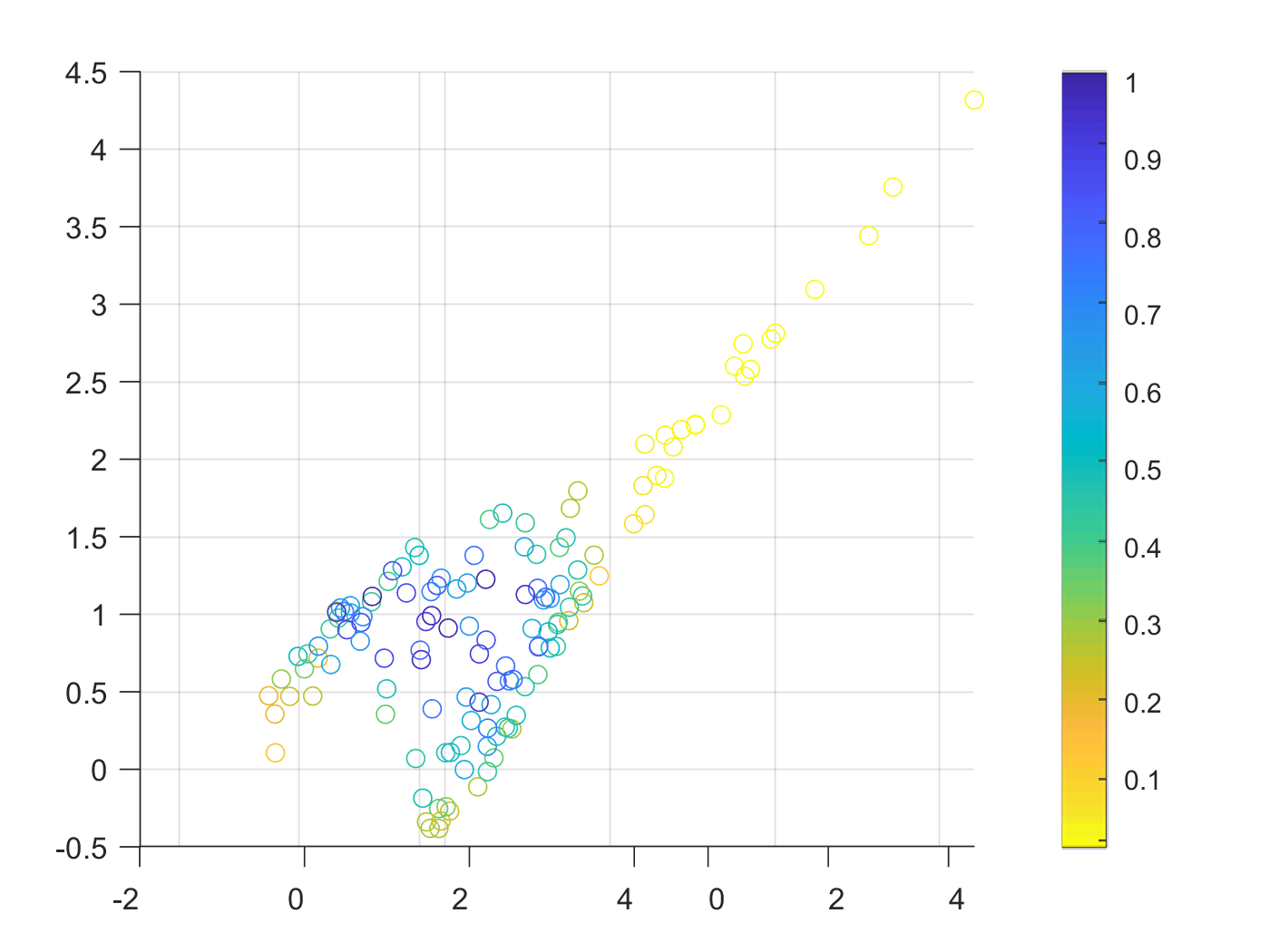} } \hspace{-2em}
	\subfloat[][]{\includegraphics[width=0.5\textwidth]{./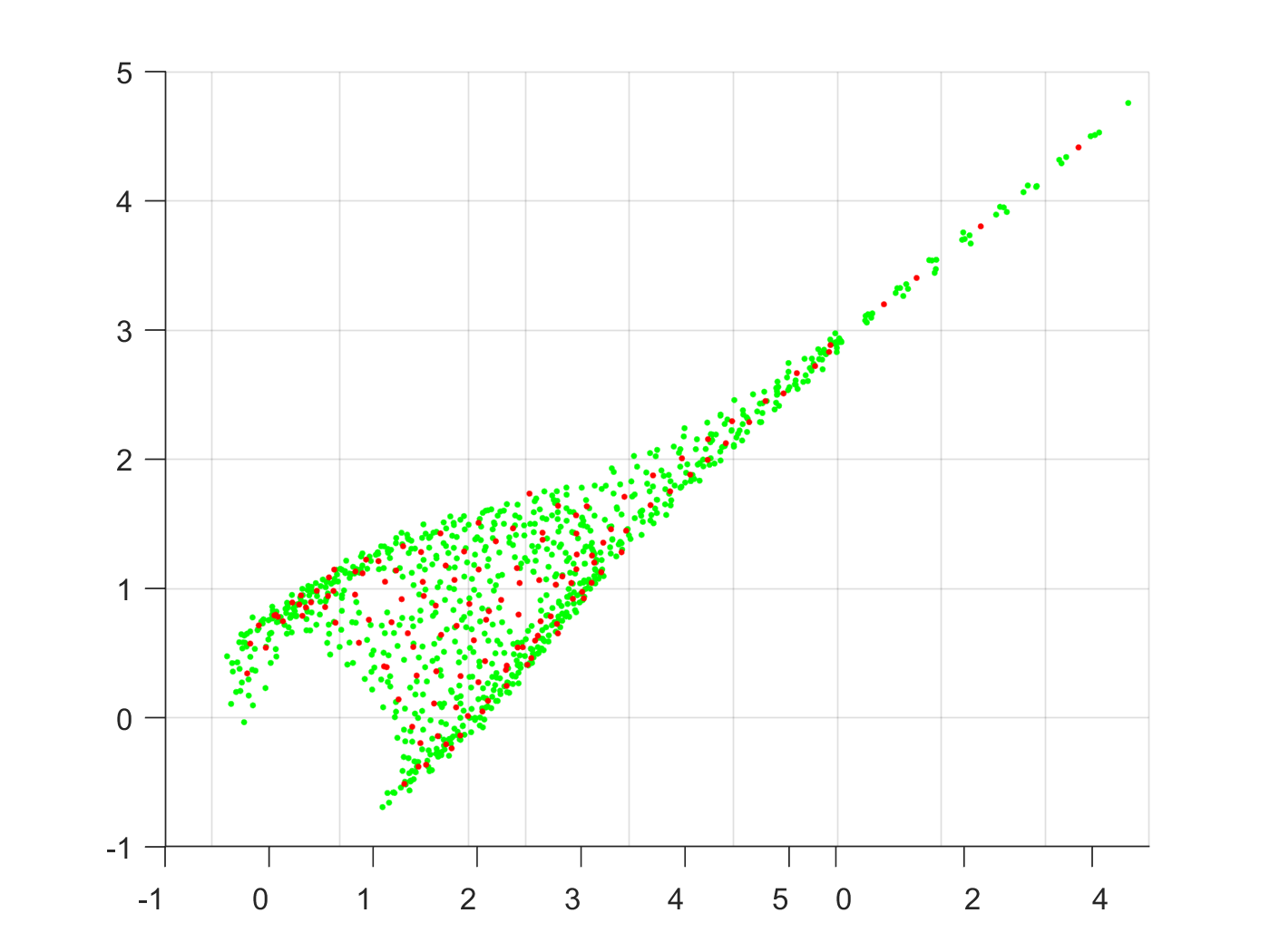}}
	\caption{Cone structure embedded into a 60-dimensional space. The first three coordinates of the point-set are shown. (A) Scattered data with uniformly distributed noise $U(-0.1; 0.1)$ (green), and the initial point-set $Q^{(0)}$ (red), with an artificial hole created. (B) The point-set generated by the MLOP algorithm. (C) The weights of the hole $T$ which are closer to $1$ near the hole location. (D) The result produced by the R-MLOP algorithm.}
	\label{fig:repair_cone_two_holes_v1}
\end{figure}

\section{Discussion and Conclusions}
\label{sec:conclusions}

This paper reviewed and developed some new results about manifold repairing in high-dimension under noisy conditions. Given a noisy point-cloud situated near a manifold of unknown intrinsic dimension, with holes, we aim at finding a noise-free reconstruction of the manifold that will amend the holes and complete the missing information. While in low-dimensional case the problem was extensively studied (in noise-free conditions), in high dimension the problem is still open. We propose a solution that extends the Manifold Locally Optimal Projection (MLOP) method \cite{faigenbaumgolovin2020manifold} by introducing an additional term that fills-in the missing data. The vanilla MLOP does not repair the manifold, since by design it maintains the proximity to the original points. To overcome this, we suggest enhancing the MLOP method to address data repairing problems by adding another force of attraction which will pull the boundary points towards their convex hull. The basic idea is to propagate information from the boundary of the hole into the hole itself, and complete missing information. In the paper, we show that the order of approximation is $O(C_1  h^2 +C_2 r^2)$, where $h$ is the representative distance between the points and $r$ is the radius of the ball that bounds the hole, and prove the validity of the proposed methodology. We demonstrate the efficiency of our method on 3D surfaces as well as on manifolds in higher dimensions, for single and multiple hole repairing.

A possible future direction would be to investigate the manifold repairing problem in the case where we observe a few sample signals. For example, suppose we observe a scatter data $P = \{p_i\}_{j=1}^J$ which is incomplete in some entries. Then is it possible to accurately guess the entries that we have not seen? This question is tightly related to the matrix competition field \cite{candes2010matrix}.

\section*{Acknowledgments}
We would like to thank Dr. Barak Sober for valuable discussions  and comments. This study was supported by a generous donation from Mr. Jacques Chahine, made through the French Friends of Tel Aviv University, and was partially supported by ISF grant 2062/18.

\bibliographystyle{spmpsci}      
\bibliography{references_MLOP}

%
%

\end{document}